  \newtheorem{theorem}{Theorem}[section]
  \newtheorem{lemma}[theorem]{Lemma}
  \newtheorem{proposition}[theorem]{Proposition}
  \newtheorem{corollary}[theorem]{Corollary}
  \newtheorem {example}[theorem] {Example}
  \newtheorem{remark}[theorem]{Remark}
  \numberwithin{equation}{section}
\begin{document}
\baselineskip=17pt
  \begin{center}
  {\large\bf On the spectrum of the operator which is a composition of integration and
  substitution}\\
  by\\
  Ignat Domanov (Donetsk)
\end{center}
{\small{\bf Abstract.} Let $\phi :  [0,1]\longrightarrow [0,1]$ be a
nondecreasing continuous function such that $\phi(x)>x$ for all
$x\in (0,1)$. Let the operator $V_{\phi} :\
f(x)\rightarrow\int\limits_0^{\phi(x)}f(t)dt$ be defined on
$L_2[0,1]$. We  prove that $V_{\phi}$ has a finite number of
non-zero eigenvalues  if and only if  $\phi(0)>0$ and
$\phi(1-\varepsilon)=1$ for some $0<\varepsilon<1$. Also, we show
that the spectral trace of the operator $V_{\phi}$ always equals
$1$.}
 \footnote[0]{\emph{2000 Mathematics Subject
Classification: 34l20, 45C05, 47A10, 47A75.}

 \emph{
Key words and phrases : eigenvalue, integral operator, Fredholm
determinant. }

 This research was partially supported by NAS of Ukraine, Grant  \# 0105U006289.}

  \setcounter{section}{1}
  {\bf 1. Introduction.}

  It is well known that the  Volterra integration operator
$V:\ f(x)\rightarrow \int\limits_0^xf(t)dt$ defined on  $L_2[0,1]$
is quasinilpotent, that is, $\sigma(V)=\{0\}$. Let $\phi\in C[0,1]$
such that $\phi(0)=0$. It was pointed out in  \cite{Lyubich1} and
\cite{Lyubich} that an operator $V_{\phi}$ defined by
\begin{equation}
V_{\phi}:\
f(x)\rightarrow\int\limits_0^{\phi(x)}f(t)dt\label{domanov1}
\end{equation}
is quasinilpotent on $C[0,1]$ whenever $\phi(x)\leqslant x$ for all
$x\in [0,1]$.

Let $\phi :  [0,1]\longrightarrow [0,1]$ be a measurable function
and let $V_{\phi}\ :L_p[0,1]\longrightarrow L_p[0,1]$ $(1\leqslant
p<\infty)$ be defined by \eqref{domanov1}. It was proved in
\cite{Tong}  and  \cite{Whitley} that $V_{\phi}$ is quasinilpotent
on $L_p[0,1]$ if and only if $\phi(x)\leqslant x$ for almost all
$x\in [0,1]$. It was noted in \cite{Whitley} and proved in
\cite{Zima} that the spectral radius of $V_{x^\alpha}$ ( defined on
$L_p[0,1]$  or $C[0,1]$) is $1-\alpha$ $(0<\alpha<1)$. The detailed
investigation of the spectrum of the operator $V_{x^\alpha}$  was
done in \cite{Domanov1}, where it was shown that the point spectrum
$\sigma_p(V_{x^\alpha})$ of $V_{x^\alpha}$ is simple and
$\sigma_p(V_{x^\alpha})=\{(1-\alpha)\alpha^{n-1}\}_{n=1}^\infty$.
The oscillation properties of the eigenfunctions of $V_{x^\alpha}$
also were investigated in \cite{Domanov1}.

The aim of this paper is to prove the following theorem.
\begin{theorem}\label{main1}
Let $\phi :  [0,1]\longrightarrow [0,1]$ be a  nondecreasing
continuous function such that $\phi(x)>x$ for all $x\in (0,1)$, and
$V_{\phi}$ be defined on $L_2[0,1]$ by \eqref{domanov1}. Set also
$\sigma_p(V_\phi)\setminus\{0\}=\{\lambda_n\}_{n=1}^\omega$ $(
1\le\omega\le\infty)$. Then:

 $(1)$ $\omega<\infty$ if and only if $\phi(0)>0$ and $\phi(1-\varepsilon)=1$ for some
$0<\varepsilon<1$;

 $(2)$  $\lim\limits_{\varepsilon\rightarrow
  0}\sum\limits_{|\lambda_n|>\varepsilon}\lambda_n=1$;

  $(3)$ $\sum\limits_{n=1}^\omega|\lambda_n|^{1+\varepsilon}<\infty$ for all
  $\varepsilon>0$.
\end{theorem}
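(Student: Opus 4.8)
The plan is to route everything through the Fredholm determinant of $V_\phi$, whose combinatorics I can control by hand. I regard $V_\phi$ as the integral operator with kernel $K(x,t)=\chi_{[0,\phi(x)]}(t)$; since $\iint_{[0,1]^2}K^2=\int_0^1\phi\le1$ it is Hilbert--Schmidt, hence compact, so $\{\lambda_n\}$ is at most a null sequence. I form the Fredholm series $\Delta(\mu)=\det(I-\mu V_\phi)=\sum_{k\ge0}(-\mu)^k c_k$, with $c_k=\frac{1}{k!}\int_{[0,1]^k}\det\big[\chi_{\{x_j\le\phi(x_i)\}}\big]_{i,j=1}^k\,dx_1\cdots dx_k$, whose zeros are exactly the reciprocals $\mu_n=1/\lambda_n$. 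The engine of the whole argument is the elementary observation that, for fixed $x_1,\dots,x_k$, the rows of the $0$--$1$ matrix $\big[\chi_{\{x_j\le\phi(x_i)\}}\big]$ --- sorted by the thresholds $\phi(x_i)$, columns sorted by $x_j$ --- are nested initial segments, so the only nondegenerate case is the lower-triangular all-ones pattern and $\det\in\{-1,0,1\}$; since the Fredholm minor is invariant under simultaneous row/column permutation, the integrand is in fact nonnegative. Hence $0\le c_k\le 1/k!$, the series converges for all $\mu$, and $|\Delta(\mu)|\le e^{|\mu|}$: $\Delta$ is a genuine entire function of order at most $1$, well defined even though $V_\phi$ need not be trace class.

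Statement $(3)$ is then immediate. An entire function of order $\le1$ has zero set of convergence exponent $\le1$, so its zeros $\mu_n=1/\lambda_n$ (which tend to $\infty$) satisfy $\sum_n|\mu_n|^{-(1+\varepsilon)}<\infty$ for every $\varepsilon>0$; equivalently $\sum_n|\lambda_n|^{1+\varepsilon}<\infty$.

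For $(2)$ I use that the first coefficient is the trace: $c_1=\operatorname{tr}V_\phi=\int_0^1 K(x,x)\,dx=\int_0^1\chi_{\{x\le\phi(x)\}}\,dx=1$, the diagonal value being unambiguously $1$ precisely because $\phi(x)>x$ puts the diagonal strictly inside the support of the kernel. Thus $\Delta'(0)/\Delta(0)=-c_1=-1$. Applying the argument principle to $\Delta'/\Delta$ and accounting for the simple pole of $\frac{1}{\mu}\frac{\Delta'}{\Delta}$ at $0$, I get $\sum_{|\mu_n|<R}\frac1{\mu_n}=\frac{1}{2\pi i}\oint_{|\mu|=R}\frac{\Delta'(\mu)}{\Delta(\mu)}\frac{d\mu}{\mu}+1$ (zeros counted with multiplicity). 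The order $\le1$ growth of $\Delta$, together with a minimum--modulus estimate along a sequence of radii $R\to\infty$ avoiding the zeros, forces the contour integral to $0$; since $|\lambda_n|>\varepsilon$ iff $|\mu_n|<1/\varepsilon$, this gives $\lim_{\varepsilon\to0}\sum_{|\lambda_n|>\varepsilon}\lambda_n=\lim_{R\to\infty}\sum_{|\mu_n|<R}\frac1{\mu_n}=1$. I expect the contour estimate to be the one genuinely analytic point here, handled by the established bound $|\Delta(\mu)|\le e^{|\mu|}$.

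Finally $(1)$: the nonzero eigenvalues are the reciprocal zeros of $\Delta$, so $\omega<\infty$ iff $\Delta$ is a polynomial. To prove sufficiency I solve the eigenequation in the form $\lambda g'(x)=g(\phi(x))$, $g(0)=0$, $g=Vf$. Because $\phi(x)>x$ this advanced equation is integrated by marching leftward: flatness $\phi\equiv1$ on $[1-\varepsilon,1]$ supplies a finite starting interval (where $f$ is constant), while $\phi(0)>0$ makes the left-inverse orbit $p\mapsto\min\{x:\phi(x)=p\}$ fall below $\phi(0)$ after finitely many steps, after which a single integration over the remaining interval closes the construction; the resulting $\Delta$ is a polynomial in $\mu$, so $\omega<\infty$. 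The converse is the main obstacle. When $\phi(0)=0$ the orbit accumulates at the fixed point $0$, and when $\phi(x)<1$ for all $x<1$ it accumulates at the fixed point $1$; in either case the march never terminates and I must upgrade ``$\Delta$ is not a polynomial'' to ``$\Delta$ has infinitely many zeros'', ruling out the escape $\Delta(\mu)=P(\mu)e^{c\mu}$. Here I can use that all power traces are strictly positive, $\operatorname{tr}V_\phi^k=\int_{[0,1]^k}\chi_{\{x_2\le\phi(x_1)\}}\cdots\chi_{\{x_1\le\phi(x_k)\}}>0$ (the integrand is $1$ on a neighbourhood of the full diagonal since $\phi(x)>x$), so that finiteness of $\{\lambda_n\}$ would force the positive sequence $\operatorname{tr}V_\phi^k=\sum_n\lambda_n^k$ to obey a finite linear recurrence. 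I then intend to show that the non-terminating marching produces eigenfunctions with arbitrarily many sign changes (a Sturm-type nodal count, in the spirit of the oscillation analysis of $V_{x^\alpha}$), equivalently that the cyclic trace integrals are not linearly recurrent, and hence $\omega=\infty$. Pinning down this necessity --- that failure of either geometric condition genuinely yields infinitely many eigenvalues --- is where the real work lies.
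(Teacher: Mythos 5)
You follow the same overall route as the paper (Fredholm determinant, the observation that the sorted kernel matrix makes each Fredholm minor equal to $0$ or $1$, then entire-function theory applied to $\Delta$), and your part $(3)$ and the coefficient identity $c_1=\int_0^1\chi(\phi(x)-x)\,dx=1$ are fine. But there is a genuine gap at the step that carries all the weight. Your only quantitative input is $0\le c_k\le 1/k!$, i.e. $|\Delta(\mu)|\le e^{|\mu|}$: order $\le 1$ and type $\le 1$. That is \emph{not} enough to force the contour integral $\frac{1}{2\pi i}\oint_{|\mu|=R}\frac{\Delta'(\mu)}{\Delta(\mu)}\frac{d\mu}{\mu}$ to $0$, nor to rule out $\Delta(\mu)=P(\mu)e^{c\mu}$. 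The function $\Delta(\mu)=e^{\mu}$ satisfies $|\Delta(\mu)|\le e^{|\mu|}$, has no zeros, and its contour integral equals $1$ for every $R$; more generally $e^{\mu}\sin(\pi\mu)/(\pi\mu)$ has order $1$, finite type, $-\Delta'(0)=-1$, yet $\lim_R\sum_{|\mu_n|<R}1/\mu_n=0$. So both your proof of $(2)$ and your treatment of the necessity half of $(1)$ (excluding finitely many zeros when $\Delta$ is not a polynomial) break down: Lindel\"of's theorem gives the conclusion you want only when the type is \emph{minimal}, i.e. $0$.

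Supplying minimal type is precisely the technical heart of the paper, and it cannot be read off from $c_k\le 1/k!$. The paper proves $c_k\le \mathrm{const}(\varepsilon,\beta)\,(4\varepsilon)^k/k!$ for every $\varepsilon\in(0,1/4)$ by bounding $\phi$ from below by explicit piecewise model functions $\psi_{\varepsilon,\beta}^{-1}$ (flat near the endpoints, a power law in the middle) and establishing a convolution identity $d_n=\sum_{k}c_k\sum_{l}b_l a_{n-k-l}$ for the iterated integrals over the three pieces (Lemmas 4.5--4.7 and Proposition 4.9). This gives type $\le 4\varepsilon$ for all $\varepsilon$, hence type $0$, after which Hadamard/Lindel\"of yield both the infinitude of the zeros and $\lim_{R\to\infty}\sum_{|\mu_n|<R}1/\mu_n=-\Delta'(0)=1$. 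Your proposed substitute for necessity --- positivity of the power traces $\operatorname{tr}V_\phi^k$ plus a Sturm-type nodal count to show these are not linearly recurrent --- is stated as an intention, not carried out, and positivity alone does not exclude $\Delta=P(\mu)e^{c\mu}$ (whose ``traces'' $\sum_n\lambda_n^k$ are eventually a finite recurrence while the genuine $\operatorname{tr}V_\phi^k$ would have to be matched term by term, which is exactly what remains to be proved). Your sufficiency argument for $(1)$ is also more easily closed the paper's way: the constraint $\phi^{n-1}(t_1)\le t_n\le 1$ in the iterated integral kills $c_n$ for $n>N(\phi)$, so $\Delta$ is a polynomial of degree $N$.
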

The order of the material is as  follows.

In section 2 we recall some classical results in the theory of
trace-class operators, in the theory of Fredholm determinants and
in the theory of entire functions. In  section 3 we calculate the
Fredholm determinant $D_{V_{\phi}}(\lambda)$ of  the operator
$V_{\phi}$. In  section 4 we estimate the order of growth of
$D_{V_{\phi}}(\lambda)$ and prove Theorem \ref{main1}. It turns
out that the matrix trace of the operator $V_\phi$ is not defined,
but the spectral trace of $V_\phi$ does not depend on $\phi$ and
always equals  $1$.  This contrasts with the fact that
$\sigma_p(V_x)=\{\emptyset\}$.  We find also the spectral(=
matrix) traces of the $V_\phi^2$ and $V_\phi^3$. In  section 5 we
assume that $\phi : [0,1]\longrightarrow [0,1]$ is a strictly
increasing continuous function such that $card\{x:\
\phi(x)=x\}<\infty$ and describe the spectrum of $V_\phi$. Then we
consider  $V_\phi$ defined on the space $L_p[0,1]$.

  \setcounter{section}{2}
  {\bf 2. Preliminaries.} Here we recall some facts about trace class operators,  Fredholm determinants
  and entire functions.

 {\bf2.1.}
  Let $K$ be a compact operator defined on an infinite dimensional Hilbert space $\mathfrak H$. Let
  $s_n(K)$ ($n\geq 1$)  be the eigenvalues of $KK^*$.
  The operator $K$ is said to be \textit{of  class} $\mathbf{S_p}$ if $\sum\limits_{n=1}^\infty s_n(K)^p<\infty$.
  The trace $\mathbf{tr}K$ of  an operator $K\in\mathbf{S_1}$   is defined as its \textit{matrix trace}:
  $
  \mathbf{tr}K=\sum\limits_{n=1}^\infty(Ke_n,e_n),
  $
where $\{e_n\}_{n=1}^\infty$ is some orthonormal basis. It is
known that $\mathbf{tr} K$ does not depend on the choice of
$\{e_n\}_{n=1}^\infty$ and the series
$\sum\limits_{n=1}^\infty(Ke_n,e_n)$ converges absolutely.
   The celebrated theorem of Lidskii (see \cite{Gohberg and Krein1}) says that the matrix trace of an
   operator $K\in\mathbf{S_1}$ is equal to its \textit{spectral trace}, which is defined as
   the sum of eigenvalues of $K$ (counted with the algebraic multiplicity):
\begin{equation}\label{Lidskii}
  \mathbf{tr}K=\sum\limits_{n=1}^\infty(Ke_n,e_n)=\sum\limits_{n=1}^\omega\lambda_n,\qquad \omega\le\infty.
\end{equation}

  Let $K$ be an integral operator:
  $(Kf)(x)=\int\limits_0^1k(x,t)f(t)dt$ on  $L_2[0,1]$.
  It is well known (see \cite{Gohberg and Krein1}) that if $k(x,t)$ is a continuous function on $[0,1]\times
  [0,1]$,
  then  $K\in\mathbf{S_1}$  and $\mathbf{tr}K$ is given by the  integral of its diagonal:
\begin{equation}\label{trace}
   \mathbf{tr}K=\int\limits_0^1k(t,t)dt.
\end{equation}

 {\bf 2.2.} Now let $k(x,t)$ be  a bounded function on
$[0,1]\times [0,1]$. By definition, put
\begin{equation}\label{fredholm2}
D_K(\lambda):=\sum\limits_{n=0}^\infty
\frac{(-1)^n}{n!}A_n\lambda^n,
\end{equation}
where  $A_0:=1$ and
\begin{equation}\label{fredholm}
\begin{split}
A_n:=\int\limits_0^1\dots\int\limits_0^1 K(t_1,\dots,t_n)dt_1\dots
dt_n, \\
 K(t_1,\dots,t_n):=\det
\begin{pmatrix}
k(t_1,t_1)&\dots  &k(t_1,t_n)\\
\vdots    &\vdots &\vdots \\
k(t_n,t_1)&\dots  &k(t_n,t_n)
\end{pmatrix}
\end{split}
\end{equation}
for $n\ge 1$. The function $D_K(\lambda)$ is called\textit{ the
Fredholm determinant} of $K$.   Recall (see
\cite{Goursat,Lovitt,Tricomi} ) that:
\begin{flalign}\label{fredholm1}
(\text{i}) & &
A_n=n!\int\limits_0^1\int\limits_{t_1}^1\int\limits_{t_2}^1\dots\int\limits_{t_{n-1}}^1
K(t_1,\dots,t_n)dt_n\dots dt_1,& & n\ge 1;& &
\end{flalign}
$($ii$)$ $D_K(\lambda)$ is an entire function of $\lambda$ of the
order $\rho\le 2$;\\
$($iii$)$ $D_K(\mu^*)=0$  if and only if $\lambda^*:=1/\mu^*\in
\sigma_p(K)$  and the multiplicity of $\mu^*$ as a root of the
Fredholm determinant of $K$ is equal to the algebraic multiplicity
of the eigenvalue $\lambda^*$.

{\bf 2.3.} From Hadamard's theorem (Th 1, p.26, \cite{Levin}) and
Lindel\"{o}f's theorem (Th 3, p.33, \cite{Levin}), we get the
following
\begin{theorem}\label{EntireFunction}
 Let $f(z)$ be an entire function of  order $\rho_f\le
1$ and  type $\sigma_f<\infty$. Let also $\{a_n\}_{n=1}^\omega$
$(\omega\le \infty)$ be all roots of  $f(z)$ and $f(0)=1$. Then

$(i)$ if $\rho_f=1$, $\sigma_f=0$ and
$\sum\limits_{n=1}^\omega\frac{1}{|a_n|}<\infty$, then
$\omega=\infty$, $f(z)=\prod\limits_{n=1}^\infty(1-\frac{z}{a_n})$

\qquad and $\sum\limits_{n=1}^\infty\frac{1}{a_n}=-f'(0)$;

$(ii)$ if $\rho_f<1$, then $
f(z)=\prod\limits_{n=1}^\omega(1-\frac{z}{a_n})$ and
$\sum\limits_{n=1}^\omega\frac{1}{a_n}=-f'(0)$;

$(iii)$  if $\rho_f=0$, then
$\sum\limits_{n=1}^\omega\frac{1}{|a_n|^\varepsilon}<\infty$ for
each $\varepsilon>0$;

$(iv)$ if $\rho_f=1$, $\sigma_f=0$ and
$\sum\limits_{n=1}^\infty\frac{1}{|a_n|}=\infty$,

 \qquad then\ \  $
f(z)=e^{az}\prod\limits_{n=1}^\infty
\left(1-\frac{z}{a_n}\right)e^{z/a_n}$ and\ \ 
 $\limsup\limits_{r\rightarrow\infty}\Big\vert
a+\sum\limits_{|a_n|<r}\frac{1}{a_n}\Big\vert=0$.

 \qquad In particular,\ \ 
$\limsup\limits_{r\rightarrow\infty}\left(
\sum\limits_{|a_n|<r}\frac{1}{a_n}\right)=-a=-f'(0)$.

$(v)$
$\sum\limits_{n=1}^\omega\frac{1}{|a_n|^{1+\varepsilon}}<\infty$
for each $\varepsilon>0$.
\end{theorem}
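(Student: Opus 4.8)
The plan is to derive all five assertions directly from the two classical inputs cited just above the statement, namely Hadamard's factorization theorem and Lindelöf's theorem, specialized to the case $\rho_f\le 1$ and normalized by $f(0)=1$. First I would record the Hadamard representation. Since $\rho_f\le 1$, the genus $p$ of the canonical product over the zeros satisfies $p\le\rho_f\le 1$, and the exponential polynomial $P$ has $\deg P\le\rho_f\le 1$; because $f(0)=1$ and $f$ has no zero at the origin, the constant term is absorbed and
\[
f(z)=e^{az}\prod_{n=1}^{\omega}E_p\!\left(\frac{z}{a_n}\right),\qquad p\in\{0,1\},
\]
where $E_0(w)=1-w$ and $E_1(w)=(1-w)e^{w}$. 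I would also invoke the standard bound that the convergence exponent $\rho_1$ of $\{a_n\}_{n=1}^\omega$ does not exceed the order, i.e. $\rho_1\le\rho_f$.

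Parts $(iii)$ and $(v)$ then follow immediately from this inequality and the definition of $\rho_1$: for every $\lambda>\rho_1$ one has $\sum_n|a_n|^{-\lambda}<\infty$. For $(v)$, each exponent $\lambda=1+\varepsilon$ exceeds $\rho_1\le\rho_f\le 1$; for $(iii)$, the hypothesis $\rho_f=0$ forces $\rho_1=0$, so every $\lambda=\varepsilon>0$ qualifies. For part $(ii)$, when $\rho_f<1$ both $p$ and $\deg P$ are forced to be $0$, so the exponential factor is trivial and $f(z)=\prod_{n=1}^{\omega}(1-z/a_n)$. The identity $\sum_n 1/a_n=-f'(0)$ I would obtain by the logarithmic derivative: since $\frac{d}{dz}\log(1-z/a_n)=\frac{-1/a_n}{1-z/a_n}$, evaluation at $z=0$ gives $f'(0)/f(0)=-\sum_n 1/a_n$, the series converging absolutely because $\rho_1<1$ yields $\sum_n|a_n|^{-1}<\infty$.

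For parts $(i)$ and $(iv)$, which treat order exactly $1$ and type $\sigma_f=0$, I would invoke Lindelöf's theorem, which characterizes minimal type of an order-$1$ function through its zeros: writing $f$ in genus-$1$ form $f(z)=e^{bz}\prod_n E_1(z/a_n)$, minimal type is equivalent to $n(r)=o(r)$ together with $\lim_{r\to\infty}\bigl(b+\sum_{|a_n|<r}1/a_n\bigr)=0$. Part $(iv)$ is precisely this statement in the case $\sum_n 1/|a_n|=\infty$, where the genus must equal $1$ and $b=a$; the ``in particular'' clause then reads off from $f'(0)=a$. For part $(i)$, the extra hypothesis $\sum_n 1/|a_n|<\infty$ collapses the genus to $0$; converting the genus-$0$ product to genus-$1$ form via $\prod_n(1-z/a_n)=e^{-z\sum_n 1/a_n}\prod_n E_1(z/a_n)$ identifies the genus-$1$ exponent as $b=a-\sum_n 1/a_n$, whereupon Lindelöf's condition $\lim_{r}\bigl(b+\sum_{|a_n|<r}1/a_n\bigr)=0$ becomes $a=0$ (the partial sums now converge to $\sum_n 1/a_n$). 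Hence $f(z)=\prod_n(1-z/a_n)$ and $\sum_n 1/a_n=-f'(0)$ as in $(ii)$, while $\omega=\infty$ because a finite product would be a polynomial of order $0$, contradicting $\rho_f=1$.

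The main obstacle is the bookkeeping in parts $(i)$ and $(iv)$: one must verify that the normalization $f(0)=1$ truly eliminates the constant term, that the genus is correctly read off from the convergence or divergence of $\sum_n 1/|a_n|$, and, above all, that Lindelöf's theorem is applied in exactly the form that translates minimal type into the stated $\limsup$ (equivalently $\lim$) condition on the partial sums of $1/a_n$. By contrast, parts $(ii)$, $(iii)$ and $(v)$ are essentially immediate consequences of Hadamard's factorization and the order--convergence-exponent inequality.
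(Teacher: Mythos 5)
Your proposal is correct and follows exactly the route the paper intends: the paper gives no written proof, simply asserting that the theorem follows from Hadamard's theorem (Th.~1, p.~26 of Levin) and Lindel\"{o}f's theorem (Th.~3, p.~33 of Levin), and your argument fills in precisely those details — the normalized Hadamard factorization with $p\le 1$, $\deg P\le 1$, the order/convergence-exponent inequality for $(iii)$ and $(v)$, and the genus-$1$ rewriting plus Lindel\"{o}f's minimal-type criterion for $(i)$ and $(iv)$. The bookkeeping you flag (that $f(0)=1$ kills the constant term, that $b=a-\sum_n 1/a_n$ after converting the genus-$0$ product, and that Lindel\"{o}f then forces $a=0$ in case $(i)$) is all handled correctly.
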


 \setcounter{section}{3}
 {\bf 3. The Fredholm determinant of the
operator $V_\phi$.} We begin with an auxiliary lemma.
\begin{lemma}\label{detK}
Let $A=(a_{ij})_{i,j=1}^n$ be an $n\times n$ matrix all of whose
elements are $0$ or $1$ and  $a_{ij}=1$ for $1\leq j\leq i\leq n$.
 Then
$$
\det A=\prod\limits_{i=2}^n(1-a_{i-1i})=\begin{cases} 1,&
a_{i-1i}=0\text{ for }2\le i\le n
,\\
0,&\text{ otherwise. }
\end{cases}
$$
\end{lemma}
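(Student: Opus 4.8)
The plan is to reduce $A$ to a form whose determinant can be read off directly, exploiting the fact that every entry on and below the diagonal equals $1$. Since any two consecutive rows agree in all their lower-triangular positions, subtracting the next row from each row should annihilate most entries. Concretely, I would perform the simultaneous row operations $R_i \to R_i - R_{i+1}$ for $i = 1, \dots, n-1$, leaving the last row untouched. This amounts to left-multiplying $A$ by a bidiagonal unipotent matrix with $1$'s on the diagonal and $-1$'s on the superdiagonal, whose determinant is $1$; hence the determinant is unchanged. Call the resulting matrix $B$. (For $n=1$ there is nothing to do and $\det A = 1$ matches the empty product, so I may assume $n \ge 2$.)

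Next I would identify the entries of $B$. For a row $i < n$ and a column $j$ with $j \le i$, both $A_{ij}$ and $A_{i+1,j}$ lie in the all-ones lower-triangular part, so $B_{ij} = A_{ij} - A_{i+1,j} = 0$; in particular the entire first column of $B$ is $(0, \dots, 0, 1)^{T}$. On the superdiagonal I would compute $B_{i,i+1} = a_{i,i+1} - A_{i+1,i+1} = a_{i,i+1} - 1 = -(1 - a_{i,i+1})$, while the last row stays all $1$'s.

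The determinant is then obtained by a single cofactor expansion along the first column: only the $(n,1)$ entry survives, giving $\det A = \det B = (-1)^{n+1} M$, where $M$ is the minor on rows $1, \dots, n-1$ and columns $2, \dots, n$. The key observation is that this minor is upper triangular: after the index shift (original column $j$ becomes column $j-1$), its below-diagonal entries are precisely the $B_{ij}$ with $j \le i$ that I already showed vanish, and its diagonal entries are the $B_{i,i+1} = -(1 - a_{i,i+1})$. Therefore $M = \prod_{i=1}^{n-1}\bigl(-(1 - a_{i,i+1})\bigr) = (-1)^{n-1}\prod_{i=1}^{n-1}(1 - a_{i,i+1})$, and the sign factors $(-1)^{n+1}$ and $(-1)^{n-1}$ cancel, yielding $\det A = \prod_{i=1}^{n-1}(1 - a_{i,i+1}) = \prod_{i=2}^{n}(1 - a_{i-1,i})$. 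Since each $a_{i-1,i} \in \{0,1\}$, every factor equals $1$ or $0$, which gives the stated case distinction.

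I do not anticipate a serious obstacle; the only point requiring care is the bookkeeping of the index shift when passing to the minor, so that the triangularity claim is checked against the correct diagonal entries $B_{i,i+1}$. An alternative to the cofactor expansion would be to observe that in $\det B = \sum_{\sigma}\operatorname{sgn}(\sigma)\prod_i B_{i,\sigma(i)}$ the vanishing of $B_{ij}$ for $j \le i$ (with $i < n$) forces $\sigma(i) \ge i+1$, so the only contributing permutation is the cyclic shift $i \mapsto i+1 \pmod n$; but the triangularity argument seems cleaner and avoids a separate sign computation for the cycle.
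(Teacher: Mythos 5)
Your proof is correct. There is nothing in the paper to compare it against: the author simply writes ``The proof is trivial'' for this lemma, so you are supplying an argument the paper omits. Your row reduction $R_i \to R_i - R_{i+1}$ is a clean way to do it; the key facts you need all check out. For $i<n$ and $j\le i$ both $A_{ij}$ and $A_{i+1,j}$ equal $1$ by hypothesis, so $B_{ij}=0$; the first column of $B$ is $(0,\dots,0,1)^{T}$; and after expanding along that column the remaining minor on rows $1,\dots,n-1$ and columns $2,\dots,n$ is upper triangular with diagonal entries $B_{i,i+1}=a_{i,i+1}-1$. The signs $(-1)^{n+1}$ from the cofactor and $(-1)^{n-1}$ from the $n-1$ negated diagonal entries cancel, giving $\det A=\prod_{i=2}^{n}(1-a_{i-1,i})$, and the case distinction follows since each factor is $0$ or $1$. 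The one point worth stating explicitly is the $n=1$ base case (empty product equals $1=\det(1)$), which you do. Your sketched alternative via the permutation expansion also works --- the vanishing of $B_{ij}$ for $j\le i$, $i<n$ forces $\sigma(i)\ge i+1$ for $i<n$, which pins $\sigma$ down to the single $n$-cycle $i\mapsto i+1$ --- but as you say it requires a separate sign computation, and the triangular route is tidier.
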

\begin{proof}
The proof is trivial.
\end{proof}
\begin{theorem}\label{Freddet1}
Let $\phi :  [0,1]\longrightarrow [0,1]$ be a nondecreasing
continuous function such that $\phi(x)>x$ for all $x\in (0,1)$. Let
also $V_{\phi}$ be defined on $L_2[0,1]$ by \eqref{domanov1}. Then
\begin{equation}\label{detVphi}
D_{V_\phi}(\lambda)=
1+\sum\limits_{n=1}^\infty(-1)^n\lambda^n\int\limits_0^1
\int\limits_{\phi(t_1)}^1\dots\int\limits_{\phi(t_{n-1})}^1dt_n\dots
dt_1.
\end{equation}
\end{theorem}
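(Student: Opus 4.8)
The plan is to compute the coefficients $A_n$ of the series \eqref{fredholm2} directly from their definition, taking advantage of the explicit form of the kernel of $V_\phi$. First I would record that $V_\phi$ is the integral operator with kernel $k(x,t)=1$ for $0\le t\le\phi(x)$ and $k(x,t)=0$ for $\phi(x)<t\le1$. This kernel is bounded (in fact $\{0,1\}$-valued), so $D_{V_\phi}(\lambda)$ is well defined by \eqref{fredholm2}--\eqref{fredholm}, and the ordered form \eqref{fredholm1} gives
\[
D_{V_\phi}(\lambda)=1+\sum_{n=1}^\infty(-1)^n\lambda^n\int_0^1\int_{t_1}^1\cdots\int_{t_{n-1}}^1 K(t_1,\dots,t_n)\,dt_n\cdots dt_1 .
\]
Thus everything reduces to evaluating the Fredholm minor $K(t_1,\dots,t_n)=\det\big(k(t_i,t_j)\big)_{i,j=1}^n$ on the ordered simplex $0\le t_1\le\cdots\le t_n\le1$.

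The heart of the argument is to recognize that on this simplex Lemma \ref{detK} applies. I would first note that $\phi(x)\ge x$ for every $x\in[0,1]$: this is assumed on $(0,1)$, and at the endpoints it follows from continuity together with the range restriction (in particular $\phi(1)=1$). Consequently, whenever $j\le i$ we have $t_j\le t_i\le\phi(t_i)$, so $k(t_i,t_j)=1$; since moreover every entry of the matrix lies in $\{0,1\}$, the hypotheses of Lemma \ref{detK} are satisfied. The lemma then gives
\[
K(t_1,\dots,t_n)=\prod_{i=2}^n\big(1-k(t_{i-1},t_i)\big),
\]
and because $1-k(t_{i-1},t_i)$ equals $1$ exactly when $t_i>\phi(t_{i-1})$ and $0$ otherwise, this product is the indicator of the set $\{\,t_i>\phi(t_{i-1}),\ 2\le i\le n\,\}$.

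Finally I would perform the iterated integration from the inside outward. Since $\phi(t_{i-1})\ge t_{i-1}$, the effect of the factor cutting off $\{t_i\le\phi(t_{i-1})\}$ is precisely to raise the lower limit of the $t_i$-integral from $t_{i-1}$ to $\phi(t_{i-1})$, so
\[
\int_0^1\int_{t_1}^1\cdots\int_{t_{n-1}}^1 K(t_1,\dots,t_n)\,dt_n\cdots dt_1=\int_0^1\int_{\phi(t_1)}^1\cdots\int_{\phi(t_{n-1})}^1 dt_n\cdots dt_1 ,
\]
and inserting this into the series above yields \eqref{detVphi}. I expect the only genuinely substantive point to be the reduction of the minor $\det\big(k(t_i,t_j)\big)$ to Lemma \ref{detK}: one must verify carefully that on the ordered simplex the matrix is all-ones on and below the diagonal, which relies on $\phi(x)\ge x$ and requires the endpoint check; once that structural fact is in hand, the determinant collapses to the superdiagonal product and the remaining integration is routine.
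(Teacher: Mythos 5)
Your proposal is correct and follows essentially the same route as the paper: identify the kernel $k(x,t)=\chi(\phi(x)-t)$, restrict to the ordered simplex via \eqref{fredholm1}, apply Lemma \ref{detK} to collapse the Fredholm minor to the superdiagonal product $\prod_{i=2}^n(1-k(t_{i-1},t_i))$, and read off the resulting region of integration. Your explicit endpoint check that $\phi(x)\ge x$ on all of $[0,1]$ is a small point the paper leaves implicit, but the argument is otherwise the same.
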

\begin{proof}
It is clear that
$(V_{\phi}f)(x)=\int\limits_0^1k(x,t)f(t)dt=:(Kf)(x)$, where
$$
k(x,t)=\chi(\phi(x)-t)=
\begin{cases}
1,&\phi(x)\ge t;\\
0,&\phi(x)<t;
\end{cases}.
$$
 Assume that $0\le t_1\le t_2\le\dots \le t_n\le
1$. Then
 $k(t_i,t_j)=1$ for $1\le j\le i\le n$ and the matrix
$\left(k(t_i,t_j)\right)_{i,j=1}^n$ satisfies the assumptions of
Lemma \ref{detK}. Hence, $K(t_1,\dots,
t_n)=\prod\limits_{i=2}^n(1-k(t_{i-1},t_i))$. Further, using
\eqref{fredholm2},\eqref{fredholm}, and \eqref{fredholm1} we get
\begin{equation*}
A_n=n!\int\limits_0^1\int\limits_{t_1}^1\int\limits_{t_2}^1\dots\int\limits_{t_{n-1}}^1
\prod\limits_{i=2}^n(1-k(t_{i-1},t_i))dt_n\dots
dt_1\\
=n!\int\limits_{\Omega_n}1 dt_n\dots dt_1,
\end{equation*}
where
\begin{equation*}
 \begin{split}
\Omega_n:=\{(t_1,\dots,t_n): 0\le t_1\le\dots\le t_n\le
1, k(t_1,t_2)=\dots=k(t_{n-1},t_n)=0\}
\\
=\{(t_1,\dots,t_n):\ 0\le t_1\le\phi(t_1)\le t_2\le\phi(t_2)
\le \dots\le \phi(t_{n-1})\le t_n\le 1\}.
\end{split}
\end{equation*}
That is
$$
A_n=n! \int\limits_0^1
\int\limits_{\phi(t_1)}^1\dots\int\limits_{\phi(t_{n-1})}^1dt_n\dots
dt_1,\ \ n\ge 1.
$$
This completes the proof.
\end{proof}
 \setcounter{section}{4}
  {\bf 4. The spectrum of the operator $V_{\phi}$.}

The following Proposition immediately follows from Theorem
\ref{Freddet1}.
\begin{proposition}
Let $\phi :  [0,1]\longrightarrow [0,1]$ be a nondecreasing
continuous function  such that  $\phi(x)> x$ for all $x\in (0,1)$.
Then $\sigma_p(V_\phi)\cap \mathbb{R}_-=\{\emptyset\}$.
\end{proposition}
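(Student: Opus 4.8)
The plan is to read the result straight off the power-series formula for the Fredholm determinant supplied by Theorem \ref{Freddet1}, using the sign-definiteness of its coefficients. First I would rewrite \eqref{detVphi} as
\begin{equation*}
D_{V_\phi}(\lambda)=\sum_{n=0}^\infty(-1)^nc_n\lambda^n,\qquad
c_0:=1,\quad
c_n:=\int_0^1\int_{\phi(t_1)}^1\cdots\int_{\phi(t_{n-1})}^1dt_n\cdots dt_1\ (n\ge 1),
\end{equation*}
and record the one genuinely substantive observation: every coefficient satisfies $c_n\ge 0$. This is immediate because each $c_n$ is, up to the factor $n!$, the volume of the region $\Omega_n$ appearing in the proof of Theorem \ref{Freddet1}; the integrand is the constant $1$, so the iterated integral is nonnegative (it is $0$ when $\Omega_n$ is empty and positive otherwise).

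Second, I would evaluate $D_{V_\phi}$ on the negative real axis. Putting $\lambda=-s$ with $s>0$ cancels the alternating sign, since $(-1)^n(-s)^n=s^n$, whence
\begin{equation*}
D_{V_\phi}(-s)=\sum_{n=0}^\infty c_n s^n\ge c_0=1>0.
\end{equation*}
Thus $D_{V_\phi}$ has no zero on $(-\infty,0)$. (The series converges for all $s$ because $D_{V_\phi}$ is entire, so the termwise manipulation is legitimate.)

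Third, I would invoke property (iii) of \S2.2: the nonzero points of $\sigma_p(V_\phi)$ are exactly the reciprocals $1/\mu^\ast$ of the zeros $\mu^\ast$ of $D_{V_\phi}$. Since the map $\mu\mapsto 1/\mu$ sends the negative reals to the negative reals, the absence of negative zeros of the Fredholm determinant is equivalent to the absence of negative eigenvalues of $V_\phi$; the point $0$ is irrelevant here, as property (iii) concerns only nonzero eigenvalues. Combining the three steps yields $\sigma_p(V_\phi)\cap\mathbb{R}_-=\emptyset$.

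As for the main obstacle: there is essentially none beyond the sign bookkeeping. The only point requiring a moment's care is the nonnegativity $c_n\ge 0$, and this is transparent once one recognizes the $c_n$ as volumes of the simplicial domains $\Omega_n$ rather than as a priori signed quantities; after that the argument is a one-line evaluation at $\lambda<0$ together with the eigenvalue--reciprocal dictionary. I therefore expect this proposition to admit a very short proof, which matches the paper's remark that it follows immediately from Theorem \ref{Freddet1}.
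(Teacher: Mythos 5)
Your proof is correct and is precisely the argument the paper has in mind: the paper states only that the Proposition ``immediately follows from Theorem \ref{Freddet1}'', and your write-up (nonnegativity of the coefficients as volumes of the regions $\Omega_n$, positivity of $D_{V_\phi}$ on the negative axis, and the root--eigenvalue reciprocity of the Fredholm determinant) is exactly that immediate deduction made explicit.
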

\begin{lemma}\label{auxLemma4}
Suppose $\phi :  [0,1]\longrightarrow [0,1]$ is a nondecreasing
continuous function and  $\phi(x)>x$ for  $x\in (0,1)$; then the
following conditions are equivalent:

$($i$)$ $\phi(0)>0$ and $\phi(1-\varepsilon)=1$ for some
$0<\varepsilon<1$;

$($ii$)$ there exists a unique $N=N(\phi)\in\{2,3,\dots\}$ such
that $\phi^N(x):=\phi(\phi(\dots \phi(x)))=1$ for all  $x\in
[0,1]$ and $\phi^{N-1}(x_0)\ne 1$ for some $x_0\in[0,1)$.
\end{lemma}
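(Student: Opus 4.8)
The plan is to make the orbit of the point $0$ under $\phi$ the central object, together with the monotonicity of the iterates $\phi^{k}$. First I would record two preliminary facts valid for every $\phi$ satisfying the standing hypotheses. Since $\phi$ is continuous, nondecreasing and $\phi(x)>x$ on $(0,1)$, letting $x\to 1^{-}$ gives $\phi(1)=1$; and whenever an orbit $\{\phi^{n}(x)\}_{n}$ lies in $(0,1)$ it is strictly increasing and bounded above by $1$, so it converges to a fixed point of $\phi$, which can only be $1$. Thus $\phi^{n}(x)\to 1$ as soon as the orbit enters $(0,1]$. Because each iterate $\phi^{k}$ is again nondecreasing, $\phi^{k}(0)\le \phi^{k}(x)$ for all $x$, and since $\phi(1)=1$ the set $\{k:\phi^{k}(0)=1\}$ is an up-set. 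Combining these yields the key equivalence
\[
\phi^{k}\equiv 1\ \text{on}\ [0,1]\iff \phi^{k}(0)=1,
\]
so that the property "some iterate is identically $1$'' is governed entirely by the single orbit $\{\phi^{n}(0)\}$.

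For (i) $\Rightarrow$ (ii) I would argue as follows. The hypothesis $\phi(1-\varepsilon)=1$ together with monotonicity and $\phi\le 1$ forces $\phi\equiv 1$ on $[1-\varepsilon,1]$. Since $\phi(0)>0$, the orbit of $0$ enters $(0,1]$, so $\phi^{n}(0)\to 1$; choosing the first $M$ with $\phi^{M}(0)\ge 1-\varepsilon$ gives $\phi^{M+1}(0)=1$, so the orbit of $0$ reaches $1$ in finitely many steps. Setting $N:=\min\{k\ge 1:\phi^{k}(0)=1\}$, the displayed equivalence gives $\phi^{N}\equiv 1$, while $\phi^{N-1}(0)<1$ shows $x_{0}=0\in[0,1)$ witnesses $\phi^{N-1}(x_{0})\ne 1$; the value $N\ge 2$ is supplied by $\phi(0)<1$, which is where the degenerate case $\phi\equiv 1$ (i.e.\ $\phi(0)=1$) must be excluded.

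For uniqueness and for (ii) $\Rightarrow$ (i): writing $m:=\min\{k:\phi^{k}\equiv 1\}$, the up-set property gives $\phi^{j}\equiv 1\iff j\ge m$, whence $\phi^{N}\equiv 1$ forces $N\ge m$ and $\phi^{N-1}\not\equiv 1$ forces $N\le m$, so $N=m$ is the unique admissible value. Conversely, assume (ii). If $\phi(0)=0$ then $\phi^{k}(0)=0$ for all $k$, contradicting $\phi^{N}(0)=1$; hence $\phi(0)>0$. To produce $\varepsilon$, suppose toward a contradiction that $\phi(x)<1$ for every $x\in[0,1)$; then $\phi^{N}(x)=1$ would force, step by step, $\phi^{N-1}(x)=\dots=\phi(x)=1$ and finally $x=1$, so $\phi^{N}$ would attain $1$ only at $x=1$, contradicting $\phi^{N}(0)=1$. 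Therefore some $x^{*}\in[0,1)$ has $\phi(x^{*})=1$, and $x^{*}>0$, since $x^{*}=0$ would give $\phi\equiv 1$ and hence $\phi^{N-1}\equiv 1$ (as $N\ge 2$), contradicting the existence of $x_{0}$. Setting $\varepsilon=1-x^{*}\in(0,1)$ yields $\phi(1-\varepsilon)=1$.

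I expect the main obstacle to be the careful bookkeeping at the endpoints: isolating the degenerate configuration $\phi\equiv 1$ (where $\phi(0)=1$) and checking that the strict bounds $\varepsilon<1$ and $N\ge 2$ are genuinely forced by the hypotheses rather than tacitly assumed. The clause "$\phi^{N-1}(x_{0})\ne 1$ for some $x_{0}\in[0,1)$'' is precisely what rules out this trivial case, and confirming that it does so in both directions is the delicate point; the convergence $\phi^{n}(0)\to 1$ and the monotonicity equivalence are otherwise routine.
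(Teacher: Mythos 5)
The paper gives no proof of this lemma --- it is explicitly ``left to the reader'' --- so there is nothing to compare your argument against; it must stand on its own, and it does. Your reduction of everything to the single orbit $\{\phi^{n}(0)\}$ via the equivalence $\phi^{k}\equiv 1 \Leftrightarrow \phi^{k}(0)=1$ is sound, the convergence $\phi^{n}(0)\to 1$ and the finite-time arrival at $1$ once the orbit enters $[1-\varepsilon,1]$ are correctly justified, and the uniqueness and (ii)$\Rightarrow$(i) directions are complete. The one genuine issue is exactly the endpoint case you flag: if $\phi(0)=1$ (equivalently $\phi\equiv 1$), condition (i) holds but the minimal iterate that is identically $1$ is $N=1$, so (ii) as stated, with $N\in\{2,3,\dots\}$, fails; your phrase ``$N\ge 2$ is supplied by $\phi(0)<1$'' cannot be derived from (i). This is a defect of the lemma's statement rather than of your proof (the paper's subsequent use of $N$ in Theorem \ref{Th4.7} would simply need $N=1$ for $\phi\equiv 1$), but to be airtight you should either add the standing assumption $\phi\not\equiv 1$ or allow $N=1$; everything else is correct.
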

\begin{proof}
The proof is left to the reader.
\end{proof}
\begin{theorem}\label{Th4.7}
Let $\phi :  [0,1]\longrightarrow [0,1]$ be a nondecreasing
continuous function such that  $\phi(x)>x$ for all $x\in (0,1)$.
Suppose also that $\phi(0)>0$, $\phi(1-\varepsilon)=1$ for some
$0<\varepsilon<1$, and $N=N(\phi)$ is determined
 by Lemma \ref{auxLemma4}
$($ii$)$. Then

$(1)$ $\sigma_p(V_\phi)\setminus\{0\}$ is a finite set. Moreover,
$\sigma_p(V_\phi)=\{0\}\bigcup(\lambda_1,\dots,\lambda_N)$, where
$\lambda_n\ne 0$;

$(2)$ $\sum\limits_{n=1}^N\lambda_n=1$.
\end{theorem}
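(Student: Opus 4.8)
The plan is to turn Theorem~\ref{Freddet1} into the statement that, under the present hypotheses, the Fredholm determinant $D_{V_\phi}(\lambda)$ is a polynomial of degree exactly $N$, and then to read off both assertions from its zeros by means of the root--eigenvalue correspondence (item (iii) of Section~2). Write
\[
I_n:=\int_0^1\int_{\phi(t_1)}^1\cdots\int_{\phi(t_{n-1})}^1 dt_n\cdots dt_1,
\]
so that by Theorem~\ref{Freddet1} the number $I_n$ is the Lebesgue measure of the region $\Omega_n=\{0\le t_1\le\phi(t_1)\le t_2\le\cdots\le\phi(t_{n-1})\le t_n\le 1\}$. On $\Omega_n$ the monotonicity of $\phi$ gives $t_2\ge\phi(t_1)\ge\phi(0)$, then $t_3\ge\phi(t_2)\ge\phi^2(0)$, and inductively $t_n\ge\phi^{n-1}(0)$. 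Since Lemma~\ref{auxLemma4}(ii) yields $\phi^N(0)=1$, for every $n\ge N+1$ the region $\Omega_n$ forces $t_n\ge\phi^{n-1}(0)=1$ together with $t_n\le 1$, hence has measure zero; thus $I_n=0$ for $n>N$ and
\[
D_{V_\phi}(\lambda)=1+\sum_{n=1}^N(-1)^n I_n\lambda^n
\]
is a polynomial of degree at most $N$.

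Next I would show the degree is exactly $N$, i.e.\ $I_N>0$. Because $\phi^{N-1}$ is nondecreasing and $\phi^{N-1}(x_0)\ne 1$ for some $x_0\in[0,1)$, we get $\phi^{N-1}(0)\le\phi^{N-1}(x_0)<1$; moreover $\phi(0)>0$ and $\phi(x)>x$ on $(0,1)$ give the strict chain $0<\phi(0)<\cdots<\phi^{N-1}(0)<1$. For small $\eta>0$ put $L_1=0$ and $L_{k+1}=\phi(L_k+\eta)$. If $t_k\in[L_k,L_k+\eta]$ for each $k$, then $\phi(t_k)\le\phi(L_k+\eta)=L_{k+1}\le t_{k+1}$, so the box $\prod_{k=1}^N[L_k,L_k+\eta]$ is contained in $\Omega_N$. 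By continuity $L_N\to\phi^{N-1}(0)<1$ as $\eta\to 0$, so for $\eta$ small enough $L_N+\eta<1$ and the box has positive measure $\eta^N$. Hence $I_N>0$ and $\deg D_{V_\phi}=N$.

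Part~(1) now follows. The polynomial $D_{V_\phi}$ has exactly $N$ zeros in $\mathbb C$ counted with multiplicity, all nonzero since $D_{V_\phi}(0)=1$; by item (iii) of Section~2 their reciprocals are precisely the nonzero points of $\sigma_p(V_\phi)$, listed with algebraic multiplicity as $\lambda_1,\dots,\lambda_N$, so $\sigma_p(V_\phi)\setminus\{0\}$ is finite. That $0\in\sigma_p(V_\phi)$ is seen directly: since $\phi(0)>0$, every $f\in L_2[0,1]$ vanishing a.e.\ on $[\phi(0),1]$ with $\int_0^{\phi(0)}f=0$ satisfies $V_\phi f=0$, giving an (infinite-dimensional) kernel. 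Hence $\sigma_p(V_\phi)=\{0\}\cup\{\lambda_1,\dots,\lambda_N\}$.

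For part~(2), note that $D_{V_\phi}$ has order $0<1$, so Theorem~\ref{EntireFunction}(ii) gives $D_{V_\phi}(\lambda)=\prod_{n=1}^N(1-\lambda/a_n)$ with $\sum_{n=1}^N 1/a_n=-D_{V_\phi}'(0)$, where the zeros $a_n$ satisfy $a_n=1/\lambda_n$. Since $D_{V_\phi}'(0)=-I_1=-\int_0^1 dt_1=-1$, we obtain $\sum_{n=1}^N\lambda_n=\sum_{n=1}^N 1/a_n=1$. The main obstacle is the non-degeneracy step $I_N>0$ (the explicit staircase box), as the vanishing $I_n=0$ for $n>N$ and the zero--eigenvalue dictionary are immediate from the tools already in place.
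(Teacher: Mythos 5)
Your proposal is correct and follows essentially the same route as the paper: truncate the Fredholm determinant of Theorem~\ref{Freddet1} using $\phi^{n-1}(t_1)\le t_n\le 1$ and $\phi^N\equiv 1$ to get a polynomial of degree at most $N$, then read off the eigenvalues and their sum from its factorization. Your staircase-box argument establishing $I_N>0$ (so that the degree is exactly $N$) is a worthwhile addition, since the paper asserts the degree is $N$ without justifying the non-vanishing of $A_N$.
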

\begin{proof}
It is easily shown that $0\in \sigma_p(V_\phi)$. Using Theorem
\ref{Freddet1}, we get
$D_{V_\phi}(\lambda)=1+\sum\limits_{n=1}^\infty A_n\lambda^n$,
where $ A_n=(-1)^n\int\limits_0^1\int\limits_{\phi(t_1)}^1\dots
\int\limits_{\phi(t_{n-1})}^1dt_n\dots dt_1$.  It is easily shown
that $\phi^{n-1}(t_1)\le t_n\le 1$. Since $\phi^n(x)=1$ for $n\ge
N$, it follows that $A_n=0$ for $n\ge N+1$. Therefore
$D_{V_\phi}(\lambda)$ is a polynomial of degree $N$ and $($1$)$ is
proved.  Further note that
$D_{V_\phi}(\lambda)=\prod\limits_{n=1}^N(1-\frac{\lambda}{a_n})$.
Thus
$\sum\limits_{n=1}^N\lambda_n=\sum\limits_{n=1}^N\frac{1}{a_n}
=-A_1=1$.
\end{proof}
 Let $\alpha_i(x),
\beta_i(x)\in C[0,1]$ ($1\leq i\leq n$). By definition, put
$$
\left\{
     \genfrac{}{}{0pt}{}{\alpha_1}{\beta_1}
     ,\dots,
     \genfrac{}{}{0pt}{}{\alpha_n}{\beta_n}
     \right\}:=
     \int\limits_{\beta_1(x)}^{\alpha_1(x)}\int\limits_{\beta_2(x_1)}^{\alpha_2(x_1)}\dots
     \int\limits_{\beta_n(x_{n-1})}^{\alpha_n(x_{n-1})}dx_n\dots
     dx_1.
$$
So $\left\{
     \genfrac{}{}{0pt}{}{\alpha_1}{\beta_1}
     ,\dots,
     \genfrac{}{}{0pt}{}{\alpha_n}{\beta_n}
     \right\}$ is a function of $x$.
 It is clear that
$$
\left\{
     \genfrac{}{}{0pt}{}{\alpha_1}{\beta_1},
     \dots,
     \genfrac{}{}{0pt}{}{\alpha_{i-1}}{\beta_{i-1}},
     \genfrac{}{}{0pt}{}{\alpha_i}{\beta_i},
     \genfrac{}{}{0pt}{}{\alpha_{i+1}}{\beta_{i+1}},
     \dots,
     \genfrac{}{}{0pt}{}{\alpha_n}{\beta_n}
      \right\}+
       \left\{
     \genfrac{}{}{0pt}{}{\alpha_1}{\beta_1},
     \dots,
     \genfrac{}{}{0pt}{}{\alpha_{i-1}}{\beta_{i-1}},
     \genfrac{}{}{0pt}{}{\gamma_i}{\alpha_i},
     \genfrac{}{}{0pt}{}{\alpha_{i+1}}{\beta_{i+1}},
     \dots,
     \genfrac{}{}{0pt}{}{\alpha_n}{\beta_n}
      \right\}
$$
\begin{equation}\label{additivity}
\begin{split}
  =
   \left\{
     \genfrac{}{}{0pt}{}{\alpha_1}{\beta_1},
     \dots,
     \genfrac{}{}{0pt}{}{\alpha_{i-1}}{\beta_{i-1}},
     \genfrac{}{}{0pt}{}{\alpha_i}{\beta_i}+
     \genfrac{}{}{0pt}{}{\gamma_i}{\alpha_i},
     \genfrac{}{}{0pt}{}{\alpha_{i+1}}{\beta_{i+1}},
     \dots,
     \genfrac{}{}{0pt}{}{\alpha_n}{\beta_n}
      \right\}\\
     =
        \left\{
     \genfrac{}{}{0pt}{}{\alpha_1}{\beta_1},
     \dots,
     \genfrac{}{}{0pt}{}{\alpha_{i-1}}{\beta_{i-1}},
         \genfrac{}{}{0pt}{}{\gamma_i}{\beta_i},
     \genfrac{}{}{0pt}{}{\alpha_{i+1}}{\beta_{i+1}},
     \dots,
     \genfrac{}{}{0pt}{}{\alpha_n}{\beta_n}
      \right\}.
\end{split}
\end{equation}
The following lemmas are needed.
  \begin{lemma}\label{auxLemma1}
  Let $0<\varepsilon_1<\varepsilon_2<1$ and
  $$
  \psi(x)=\begin{cases}
  \psi_1(x),& x\in [0,\varepsilon_1];\\
  \psi_2(x),& x\in [\varepsilon_1,\varepsilon_2];\\
  \psi_3(x),& x\in [\varepsilon_2,1];
  \end{cases}
  $$
  be a strictly increasing continuous function such that
  $\psi(\varepsilon_1)=\varepsilon_1$ and $\psi(\varepsilon_2)=\varepsilon_2$. Let also
  $a_0=b_0=c_0=1$  and
  $a_k$, $b_k$, $c_k$, $d_k$ ($k=1,2\dots$) be  $k$-multiple integrals defined by
 \begin{eqnarray*}
  a_k:=\left\{
  \genfrac{}{}{0pt}{}{\varepsilon_1}{0},
  \genfrac{}{}{0pt}{}{\psi_1}{0},
     \dots,
  \genfrac{}{}{0pt}{}{\psi_1}{0}
     \right\},&\quad &
 b_k:=\left\{
  \genfrac{}{}{0pt}{}{\varepsilon_2}{\varepsilon_1},
  \genfrac{}{}{0pt}{}{\psi_2}{\varepsilon_1},
     \dots,
  \genfrac{}{}{0pt}{}{\psi_2}{\varepsilon_1}
     \right\},\quad
\\
 c_k:=\left\{
     \genfrac{}{}{0pt}{}{1}{\varepsilon_2},
     \genfrac{}{}{0pt}{}{\psi_3}{\varepsilon_2},
     \dots,
     \genfrac{}{}{0pt}{}{\psi_3}{\varepsilon_2}
     \right\},&\quad &
     d_{k}:=\left\{
     \genfrac{}{}{0pt}{}{1}{0},
     \genfrac{}{}{0pt}{}{\psi}{0},
     \dots,
     \genfrac{}{}{0pt}{}{\psi}{0}
     \right\}.
\end{eqnarray*}
 Then
  $$
  d_n=\sum\limits_{k=0}^n c_k\sum\limits_{l=0}^{n-k}b_la_{n-k-l},\qquad n=1,2,\dots.
  $$
  \end{lemma}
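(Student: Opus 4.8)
The plan is to read each of $a_k,b_k,c_k,d_k$ as the Euclidean volume of an explicit nested region and to prove the equivalent statement that the ordinary generating functions multiply, i.e. $\sum_n d_n z^n=\big(\sum_k a_k z^k\big)\big(\sum_l b_l z^l\big)\big(\sum_m c_m z^m\big)$; the stated formula is exactly the Cauchy product of these three series. Unwinding the bracket notation, $d_n$ is the volume of
$$\Delta_n=\{(x_1,\dots,x_n):\ 0\le x_1\le 1,\ 0\le x_j\le\psi(x_{j-1})\ (2\le j\le n)\},$$
while $a_k,b_k,c_k$ are the volumes of the analogous regions in which every coordinate is confined to $[0,\varepsilon_1]$, to $[\varepsilon_1,\varepsilon_2]$, and to $[\varepsilon_2,1]$ respectively (the outermost upper limit being $\varepsilon_1,\varepsilon_2,1$, the inner upper limits being $\psi$, and the lower limits being $0,\varepsilon_1,\varepsilon_2$). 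On each of these subintervals $\psi$ agrees with $\psi_1,\psi_2,\psi_3$, so these volumes are indeed the integrals defining $a_k,b_k,c_k$.

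First I would record the two structural facts that make the factorization possible. Since $\psi$ is increasing with $\psi(\varepsilon_1)=\varepsilon_1$ and $\psi(\varepsilon_2)=\varepsilon_2$, each of the three subintervals is mapped into itself by $\psi$; equivalently $x<\varepsilon_2$ forces $\psi(x)<\varepsilon_2$ and $x<\varepsilon_1$ forces $\psi(x)<\varepsilon_1$. Applied along a chain $(x_1,\dots,x_n)\in\Delta_n$, where $x_j\le\psi(x_{j-1})$, this yields the cascade: once some coordinate drops below $\varepsilon_2$ (resp. below $\varepsilon_1$), every later coordinate stays below $\varepsilon_2$ (resp. below $\varepsilon_1$). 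Hence the index sets $\{j:x_j\ge\varepsilon_2\}\subseteq\{j:x_j\ge\varepsilon_1\}$ are nested prefixes $\{1,\dots,k\}\subseteq\{1,\dots,k+l\}$, so every point of $\Delta_n$ carries a well-defined signature $(k,l,m)$ with $k+l+m=n$, telling how many leading coordinates lie in $[\varepsilon_2,1]$, how many next lie in $[\varepsilon_1,\varepsilon_2)$, and how many trailing ones lie in $[0,\varepsilon_1)$.

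Then I would fix a signature and show that the corresponding piece of $\Delta_n$ is a product. Within each block the surviving constraints are precisely those defining the $C$-, $B$-, and $A$-regions of dimensions $k,l,m$; the only constraints coupling consecutive blocks are the two chain links $x_{k+1}\le\psi(x_k)$ and $x_{k+l+1}\le\psi(x_{k+l})$ crossing the $C/B$ and $B/A$ boundaries. Each is automatically satisfied because the earlier coordinate lies in the higher block, so $\psi(x_k)\ge\varepsilon_2>x_{k+1}$ and $\psi(x_{k+l})\ge\varepsilon_1>x_{k+l+1}$; this is exactly where the invariance is used, and it also replaces the ambient upper limit of the first coordinate of the middle and bottom blocks by the constants $\varepsilon_2$ and $\varepsilon_1$ (the top block keeping upper limit $1$). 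The degenerate cases $k=0$ or $l=0$ are covered by the same inequalities together with $a_0=b_0=c_0=1$. Consequently the signature-$(k,l,m)$ piece has volume $c_k b_l a_m$ by Fubini, and summing over all $k+l+m=n$ gives $d_n=\sum_{k=0}^n c_k\sum_{l=0}^{n-k} b_l a_{n-k-l}$.

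The splitting of $\Delta_n$ into signature pieces is just repeated use of the additivity \eqref{additivity} of the bracket symbol, inserting $\varepsilon_1$ and $\varepsilon_2$ as intermediate integration limits. The one genuinely substantive point—and where I expect to spend the most care—is verifying that the two boundary chain links are vacuous and that the limits they induce match the definitions of $a,b,c$ (including the empty-block configurations); the remainder is Fubini and the Cauchy-product rearrangement.
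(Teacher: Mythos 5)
Your proof is correct, and although it rests on the same basic idea as the paper's --- splitting the integration variables at the fixed points $\varepsilon_1$ and $\varepsilon_2$ of $\psi$ --- the execution is genuinely different. The paper stays entirely inside its bracket calculus: it applies the additivity identity \eqref{additivity} to write $d_n=K_n+L_n+M_n$ and then peels off one coordinate at a time, obtaining $L_n=\sum_k b_ka_{n-k}$ and $M_n=\sum_k c_ka_{n-k}+\sum_k c_kL_{n-k}$ through an iterated recursion indicated by ellipses. You instead partition the region $\Delta_n$ in a single step into pieces indexed by compositions $k+l+m=n$, justified by the observation that the chain $x_j\le\psi(x_{j-1})$ together with $\psi(\varepsilon_1)=\varepsilon_1$, $\psi(\varepsilon_2)=\varepsilon_2$ and monotonicity forces the three blocks to occur as consecutive prefixes, and that the two cross-block links $x_{k+1}\le\psi(x_k)$ and $x_{k+l+1}\le\psi(x_{k+l})$ are automatically satisfied; each piece is then an exact product region of volume $c_kb_la_m$ by Fubini, and summing gives the three-fold Cauchy product. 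Your version makes the combinatorial content and the precise role of the hypotheses transparent (at the cost of a measure-zero remark about the boundary hyperplanes $x_j=\varepsilon_i$, which is harmless for Lebesgue volumes), whereas the paper's version reuses the notation and identity \eqref{additivity} it needs elsewhere but requires the reader to trust the telescoping steps. Both arguments are complete; the degenerate blocks $k=0$ or $l=0$ are correctly absorbed by $a_0=b_0=c_0=1$ in your treatment.
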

  \begin{proof}
  Using \eqref{additivity}, we get
  \begin{equation*}
  \begin{split}
  d_n=\left\{
      \genfrac{}{}{0pt}{}{\varepsilon_1}{0}
         +
      \genfrac{}{}{0pt}{}{\varepsilon_2}{\varepsilon_1}
      +
    \genfrac{}{}{0pt}{}{1}{\varepsilon_2}
     ,
     \genfrac{}{}{0pt}{}{\psi}{0},
     \dots,
     \genfrac{}{}{0pt}{}{\psi}{0}
     \right\}
     &=
     \left\{
     \genfrac{}{}{0pt}{}{\varepsilon_1}{0},
     \genfrac{}{}{0pt}{}{\psi_1}{0},
     \dots,
     \genfrac{}{}{0pt}{}{\psi_1}{0}
     \right\}\\ +
     \left\{
     \genfrac{}{}{0pt}{}{\varepsilon_2}{\varepsilon_1},
     \genfrac{}{}{0pt}{}{\varepsilon_1}{0}+
     \genfrac{}{}{0pt}{}{\psi_2}{\varepsilon_1},
     \genfrac{}{}{0pt}{}{\psi}{0},
     \dots,
     \genfrac{}{}{0pt}{}{\psi}{0}
     \right\}
     &+\left\{
     \genfrac{}{}{0pt}{}{1}{\varepsilon_2},
     \genfrac{}{}{0pt}{}{\varepsilon_1}{0}+
     \genfrac{}{}{0pt}{}{\varepsilon_2}{\varepsilon_1}+
     \genfrac{}{}{0pt}{}{\psi_3}{\varepsilon_2},
     \genfrac{}{}{0pt}{}{\psi}{0},
     \dots,
     \genfrac{}{}{0pt}{}{\psi}{0}
     \right\}\\ &=
    : K_n+L_n+M_n.
  \end{split}
  \end{equation*}
  By definition  $K_{n}=a_n$. Further, again using \eqref{additivity}, we get
  \begin{equation*}
  \begin{split}
  L_n=\ \left\{
     \genfrac{}{}{0pt}{}{\varepsilon_2}{\varepsilon_1},
     \genfrac{}{}{0pt}{}{\varepsilon_1}{0},
     \genfrac{}{}{0pt}{}{\psi_1}{0},
     \dots,
     \genfrac{}{}{0pt}{}{\psi_1}{0}
     \right\}+
  \left\{
     \genfrac{}{}{0pt}{}{\varepsilon_2}{\varepsilon_1},
     \genfrac{}{}{0pt}{}{\psi_2}{\varepsilon_1},
     \genfrac{}{}{0pt}{}{\varepsilon_1}{0}+
     \genfrac{}{}{0pt}{}{\psi_2}{\varepsilon_1},
     \genfrac{}{}{0pt}{}{\psi}{0},
     \dots,
     \genfrac{}{}{0pt}{}{\psi}{0}
     \right\}
\\
=\ b_1a_{n-1}+
     \left\{
     \genfrac{}{}{0pt}{}{\varepsilon_2}{\varepsilon_1},
     \genfrac{}{}{0pt}{}{\psi_2}{\varepsilon_1},
     \genfrac{}{}{0pt}{}{\varepsilon_1}{0},
     \genfrac{}{}{0pt}{}{\psi_1}{0},
     \dots,
     \genfrac{}{}{0pt}{}{\psi_1}{0}
     \right\}+
\left\{
     \genfrac{}{}{0pt}{}{\varepsilon_2}{\varepsilon_1},
     \genfrac{}{}{0pt}{}{\psi_2}{\varepsilon_1},
     \genfrac{}{}{0pt}{}{\psi_2}{\varepsilon_1},
     \genfrac{}{}{0pt}{}{\varepsilon_1}{0}+
     \genfrac{}{}{0pt}{}{\psi_2}{\varepsilon_1},
     \genfrac{}{}{0pt}{}{\psi}{0},
     \dots,
     \genfrac{}{}{0pt}{}{\psi}{0}
     \right\}
\\
 =\ b_1a_{n-1}+b_2a_{n-2}+\left\{
     \genfrac{}{}{0pt}{}{\varepsilon_2}{\varepsilon_1},
     \genfrac{}{}{0pt}{}{\psi_2}{\varepsilon_1},
     \genfrac{}{}{0pt}{}{\psi_2}{\varepsilon_1},
     \genfrac{}{}{0pt}{}{\psi_2}{\varepsilon_1},
     \genfrac{}{}{0pt}{}{\psi}{0},
     \dots,
     \genfrac{}{}{0pt}{}{\psi}{0}
     \right\}=\dots=\sum\limits_{k=1}^nb_ka_{n-k},
\end{split}
\end{equation*}
\begin{equation*}
\begin{split}
  M_n&=\ \left\{
     \genfrac{}{}{0pt}{}{1}{\varepsilon_2},
     \genfrac{}{}{0pt}{}{\varepsilon_1}{0},
     \genfrac{}{}{0pt}{}{\psi_1}{0},
     \dots,
     \genfrac{}{}{0pt}{}{\psi_1}{0}
     \right\}+
\left\{
     \genfrac{}{}{0pt}{}{1}{\varepsilon_2},
     \genfrac{}{}{0pt}{}{\varepsilon_2}{\varepsilon_1},
     \genfrac{}{}{0pt}{}{\psi_2}{0},
     \genfrac{}{}{0pt}{}{\psi}{0},
     \dots,
     \genfrac{}{}{0pt}{}{\psi}{0}
     \right\}\\
 &\qquad\qquad\qquad\qquad\qquad\quad+
 \left\{
     \genfrac{}{}{0pt}{}{1}{\varepsilon_2},
     \genfrac{}{}{0pt}{}{\psi_3}{\varepsilon_2},
     \genfrac{}{}{0pt}{}{\psi}{0},
     \dots,
     \genfrac{}{}{0pt}{}{\psi}{0}
 \right\}
  \\
&=\ c_1a_{n-1}+c_1L_{n-1}+ \left\{
     \genfrac{}{}{0pt}{}{1}{\varepsilon_2},
     \genfrac{}{}{0pt}{}{\psi_3}{\varepsilon_2},
      \genfrac{}{}{0pt}{}{\varepsilon_1}{0}+
     \genfrac{}{}{0pt}{}{\varepsilon_2}{\varepsilon_1}+
     \genfrac{}{}{0pt}{}{\psi_3}{\varepsilon_2},
      \genfrac{}{}{0pt}{}{\psi}{0},
     \dots,
     \genfrac{}{}{0pt}{}{\psi}{0}
     \right\}
\\
&=\ c_1a_{n-1}+c_1L_{n-1}+c_2a_{n-2}
 +c_2L_{n-2}  \\
  &\qquad\qquad\qquad\qquad\qquad\quad+\left\{
     \genfrac{}{}{0pt}{}{1}{\varepsilon_2},
     \genfrac{}{}{0pt}{}{\psi_3}{\varepsilon_2},
          \genfrac{}{}{0pt}{}{\psi_3}{\varepsilon_2},
      \genfrac{}{}{0pt}{}{\varepsilon_1}{0}+
     \genfrac{}{}{0pt}{}{\varepsilon_2}{\varepsilon_1}+
     \genfrac{}{}{0pt}{}{\psi_3}{\varepsilon_2},
      \genfrac{}{}{0pt}{}{\psi}{0},
     \dots,
     \genfrac{}{}{0pt}{}{\psi}{0}
 \right\} 
 \\
&=\dots=\
\sum\limits_{k=1}^nc_ka_{n-k}+\sum\limits_{k=1}^{n-1}c_kL_{n-k}=
 \sum\limits_{k=1}^nc_ka_{n-k}+\sum\limits_{k=1}^nc_k\sum\limits_{l=1}^{n-k}b_la_{n-k-l}.
\end{split}
\end{equation*}
  Finally, we obtain
  \begin{equation*}
\begin{split}
  d_n=K_n+L_n+M_n&=c_0a_n+\sum\limits_{k=1}^nb_ka_{n-k}+ \sum\limits_{k=1}^nc_ka_{n-k}+
  \sum\limits_{k=1}^nc_k\sum\limits_{l=1}^{n-k}b_la_{n-k-l}\\
&=\sum\limits_{k=0}^n c_k\sum\limits_{l=0}^{n-k}b_la_{n-k-l}.
 \end{split}
\end{equation*}
  \end{proof}
\begin{lemma}\label{auxlemma3}
Let $0<\varepsilon\leq 1/4$, $\beta>1$, and
  $$
  \psi_{\varepsilon,\beta}(x)=\begin{cases}
  x,& x\in [0,\varepsilon];\\
  \varepsilon+(1-2\varepsilon)^{1-\beta}(x-\varepsilon)^\beta,& x\in [\varepsilon,1-\varepsilon];\\
  x,& x\in [1-\varepsilon,1];
  \end{cases}
  $$
  Then
\begin{flalign}\label{eq1}
  &(1)\qquad 
     d_n:=\left\{
     \genfrac{}{}{0pt}{}{1}{0},
     \genfrac{}{}{0pt}{}{\psi_{\varepsilon,\beta}}{0},
     \dots,
     \genfrac{}{}{0pt}{}{\psi_{\varepsilon,\beta}}{0}
     \right\}=\frac{(2\varepsilon)^n}{n!}+\frac{(1-2\varepsilon)(2\varepsilon)^{n-1}}{(n-1)!}
\\
     &  \qquad\qquad\quad +\sum\limits_{l=2}^n
     \frac{(1-2\varepsilon)^l(2\varepsilon)^{n-l}}{(n-l)!(1+\beta)\dots(1+\beta+\dots\beta^{l-1})},\qquad
     n=1,2,\dots;\nonumber
\\ &(2)\qquad 
d_n<const(\varepsilon,\beta)\frac{(4\varepsilon)^n}{n!},\quad
n=1,2,\dots,\nonumber
\end{flalign}
where $const(\varepsilon,\beta)$ does not depend on $n$.
\end{lemma}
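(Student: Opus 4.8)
The plan is to reduce everything to Lemma~\ref{auxLemma1}. First I would observe that $\psi_{\varepsilon,\beta}$ is exactly the function $\psi$ of that lemma with $\varepsilon_1=\varepsilon$, $\varepsilon_2=1-\varepsilon$ and the three branches $\psi_1(x)=x$, $\psi_2(x)=\varepsilon+(1-2\varepsilon)^{1-\beta}(x-\varepsilon)^\beta$, $\psi_3(x)=x$; one checks at once that $\psi_2(\varepsilon)=\varepsilon$, $\psi_2(1-\varepsilon)=1-\varepsilon$ and that $\psi_{\varepsilon,\beta}$ is continuous and strictly increasing (since $\beta>1$), so the hypotheses are met. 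Then Lemma~\ref{auxLemma1} gives $d_n=\sum_{k=0}^n c_k\sum_{l=0}^{n-k}b_l a_{n-k-l}$, and it remains to evaluate $a_k,b_k,c_k$ for these particular branches.

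Since $\psi_1$ and $\psi_3$ are the identity, $a_k$ and $c_k$ are volumes of standard simplices of height $\varepsilon_1=\varepsilon$ and $1-\varepsilon_2=\varepsilon$ respectively, so $a_k=c_k=\varepsilon^k/k!$. The only genuine computation is $b_k$. Here I would substitute $u_i=x_i-\varepsilon$, turning $b_k$ into $\int_0^{L}\int_0^{Au_1^{\beta}}\cdots\int_0^{Au_{k-1}^{\beta}}du_k\cdots du_1$ with $L=1-2\varepsilon$ and $A=L^{1-\beta}$, and integrate from the inside out. An induction shows that after eliminating the inner $m$ variables one is left with $C_m\,u^{e_m}$, where the exponents compound as $e_m=\beta+\beta^2+\cdots+\beta^m$ and the constants accumulate the factors $1+\beta+\cdots+\beta^{m}$; carrying this through and using $A=L^{1-\beta}$, the powers of $L$ telescope to exactly $k$, giving the closed form $b_k=(1-2\varepsilon)^k\big/\prod_{j=1}^{k-1}(1+\beta+\cdots+\beta^j)$ (with $b_0=1$ and the empty product equal to $1$, so $b_1=1-2\varepsilon$). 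This bookkeeping --- tracking the compounding exponents and checking that the power of $L$ collapses to $k$ --- is the step I expect to be the main obstacle.

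With $a_k,b_k,c_k$ in hand, I would rewrite $d_n$ as the triple convolution $\sum_{k+l+m=n}c_k b_l a_m$ and sum over $l$ last: the inner sum $\sum_{k+m=j}c_k a_m=\sum_{k=0}^j\frac{\varepsilon^k}{k!}\frac{\varepsilon^{j-k}}{(j-k)!}$ collapses by the binomial theorem to $(2\varepsilon)^j/j!$. Hence $d_n=\sum_{l=0}^n b_l\,(2\varepsilon)^{n-l}/(n-l)!$, and peeling off $l=0$ and $l=1$ and substituting the closed form of $b_l$ for $l\ge2$ yields precisely formula~(1).

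For part (2), I would start from the just-proved identity, rewritten via $\frac{n!}{(n-l)!}=\binom{n}{l}l!$ as $d_n=\frac{(2\varepsilon)^n}{n!}\sum_{l=0}^n\binom{n}{l}\frac{b_l\,l!}{(2\varepsilon)^l}$. The key observation is that $\prod_{j=1}^{l-1}(1+\beta+\cdots+\beta^j)\ge\beta^{l(l-1)/2}$ grows super-exponentially in $l$ (as $\beta>1$), so it dominates $l!\big(\frac{1-2\varepsilon}{2\varepsilon}\big)^l$; consequently $\frac{b_l\,l!}{(2\varepsilon)^l}=\frac{(1-2\varepsilon)^l l!}{(2\varepsilon)^l\prod_{j=1}^{l-1}(1+\cdots+\beta^j)}$ tends to $0$, and hence $M:=\sup_{l\ge0}\frac{b_l\,l!}{(2\varepsilon)^l}<\infty$, with $M$ depending only on $\varepsilon,\beta$. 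As every summand is nonnegative, $\sum_{l=0}^n\binom{n}{l}\frac{b_l\,l!}{(2\varepsilon)^l}\le M\sum_{l=0}^n\binom{n}{l}=M\,2^n$, whence $d_n\le M\,(4\varepsilon)^n/n!$, which is the desired estimate with $const(\varepsilon,\beta)=M$.
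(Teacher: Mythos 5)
Your proposal is correct and follows essentially the same route as the paper: reduce to Lemma~\ref{auxLemma1} with $\varepsilon_1=\varepsilon$, $\varepsilon_2=1-\varepsilon$, compute $a_k=c_k=\varepsilon^k/k!$ and the closed form of $b_k$ by the same inductive integration, collapse the convolution over $c_k a_m$ by the binomial theorem, and then use the super-exponential growth of $\prod_j(1+\beta+\cdots+\beta^j)$ to get part (2). The only (harmless) difference is in (2), where you bound $\sup_l b_l\,l!/(2\varepsilon)^l$ via $\prod\ge\beta^{l(l-1)/2}$ instead of the paper's AM--GM estimate and splitting of the sum at a threshold $N$; both yield the same $(4\varepsilon)^n/n!$ bound.
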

\begin{proof}
Substituting   $\psi_{\varepsilon,\beta}$ for  $\psi(x)$  in Lemma
\ref{auxLemma1} , we get \eqref{eq1}. Indeed, it is easily proved
that $a_l=c_l=\frac{\varepsilon^l}{l!}$ ($l=0,1,\dots n$). By
definition, put $\tilde{b}_1(x):=(1-2\varepsilon)^{1-\beta}(x-\varepsilon)^\beta$, \ 
$\psi_2(x):=\varepsilon+\tilde{b}_1(x)$, and 
$$
\tilde{b}_l(x):=\underbrace{\left\{
  \genfrac{}{}{0pt}{}{\psi_2}{\varepsilon},
     \dots,
  \genfrac{}{}{0pt}{}{\psi_2}{\varepsilon}
     \right\}}_{l},\quad l=2,3,\dots.
$$
Then
$\tilde{b}_{l+1}(x)=\int\limits_\varepsilon^{\psi_2(x)}\tilde{b}_l(t)dt$.
 It can easily be checked (by induction on $l$) that
$$
\tilde{b}_l(x)=
\frac{(1-2\varepsilon)^{l-\beta-\dots-\beta^l}(x-\varepsilon)^{\beta+\beta^2+\dots+\beta^l}}{(1+\beta)\dots(1+\beta+\dots+\beta^{l-1})},
\qquad l=2,3,\dots.
$$
Since $b_l=\tilde{b}_l(1-\varepsilon)$, we see that
\begin{equation}\label{1}
 b_0=1,\quad b_1=1-2\varepsilon,\quad
b_l=\frac{(1-2\varepsilon)^l}{(1+\beta)\dots(1+\beta+\dots+\beta^{l-1})},\quad
l=2,3,\dots
\end{equation}
 Using Lemma \ref{auxLemma1}, we get
\begin{equation}\label{2}
\begin{split}
 d_n=&\sum\limits_{k=0}^n
c_k\sum\limits_{l=0}^{n-k}b_la_{n-k-l}=
\sum\limits_{l=0}^nb_l\sum\limits_{k=0}^{n-l}c_ka_{n-k-l}=
\sum\limits_{l=0}^nb_l\sum\limits_{l=0}^{n-l}\frac{\varepsilon^k}{k!}\frac{\varepsilon^{n-k-l}}{(n-k-l)!}\\
=&\sum\limits_{l=0}^nb_l\frac{\varepsilon^{n-l}}{(n-l)!}\sum\limits_{l=0}^{n-l}\frac{(n-l)!}{k!(n-l-k)!}=
\sum\limits_{l=0}^nb_l\frac{(2\varepsilon)^{n-l}}{(n-l)!} \qquad
n=1,2,\dots.
\end{split}
\end{equation}
Substituting \eqref{1} for $b_l$ in \eqref{2} we get \eqref{eq1}.

{\bf (2)} Taking into account the inequality of arithmetic and
geometric means, we obtain
\begin{equation}\label{arifmandgeom}
 (1+\beta)\dots(1+\beta+\dots\beta^{l-1})\ge
2\beta^{1/2}3\beta^{2/2}\dots l\beta^{(l-1)/2}=l!\beta^{(l-1)l/4}.
\end{equation}
 Hence,
$$
b_l\le
\frac{(1-2\varepsilon)^l}{l!}\left(\frac{1}{\beta^{1/4}}\right)^{l^2-l}<\frac{(1-2\varepsilon)^l}{l!}.
$$
Let $N$ be a number such that
$\left(\frac{1}{\beta^{1/4}}\right)^{l^2-l}<\left(\frac{2\varepsilon}{1-2\varepsilon}\right)^l$
for $l>N$ (for example,
$N=[4\log_\beta\left(\frac{1}{2\varepsilon}-1\right)]+2$). Then
$b_l<\frac{(2\varepsilon)^l}{l!}$ for $l>N$. Using \eqref{2}, we
get for $n>N$
\begin{equation*}
\begin{split}
d_n&=\sum\limits_{l=0}^Nb_l\frac{(2\varepsilon)^{n-l}}{(n-l)!}+
\sum\limits_{l=N+1}^nb_l\frac{(2\varepsilon)^{n-l}}{(n-l)!}\\
&\le
\frac{(2\varepsilon)^n}{n!}\sum\limits_{l=0}^N\frac{n!}{l!(n-l)!}\left(\frac{1-2\varepsilon}{2\varepsilon}\right)^l+
\frac{(2\varepsilon)^n}{n!}\sum\limits_{l=N+1}^n\frac{n!}{l!(n-l)!}
\\
&\le
\frac{(2\varepsilon)^n}{n!}\left(\frac{1-2\varepsilon}{2\varepsilon}\right)^N\sum\limits_{l=0}^N\frac{n!}{l!(n-l)!}+
\frac{(2\varepsilon)^n}{n!}\sum\limits_{l=0}^n\frac{n!}{l!(n-l)!}\\
&\le
\frac{(4\varepsilon)^n}{n!}\left(\left(\frac{1-2\varepsilon}{2\varepsilon}\right)^N+1\right).
\end{split}
\end{equation*}
 This completes the proof.
\end{proof}
\begin{lemma}\label{auxLemma2}
Let $\beta>1$ and
$$
  \psi_\beta(x):=
  \begin{cases}
 2^{\beta-1}x^\beta,& x\in [0,1/2];\\
 2^{\beta-1}(x-1/2)^\beta+1/2,& x\in [1/2,1];
  \end{cases}=:
  \begin{cases}
  \psi_1(x),& x\in [0,1/2];\\
  \psi_2(x),& x\in [1/2,1];
  \end{cases}.
  $$
 Let also   $a_0=b_0=1$  and
  $a_k$, $b_k$,  and $d_k$ ($k=1,2\dots$) be  $k$-multiple integrals defined by
  \begin{equation*}
   \begin{split}
      a_k:=\left\{
  \genfrac{}{}{0pt}{}{1/2}{0},
  \genfrac{}{}{0pt}{}{\psi_1}{0},
     \dots,
  \genfrac{}{}{0pt}{}{\psi_1}{0}
     \right\},\quad
 &b_k:=\left\{
  \genfrac{}{}{0pt}{}{1}{1/2},
  \genfrac{}{}{0pt}{}{\psi_2}{1/2},
     \dots,
  \genfrac{}{}{0pt}{}{\psi_2}{1/2}
     \right\},\quad\\
&d_{k}:=\left\{
     \genfrac{}{}{0pt}{}{1}{0},
     \genfrac{}{}{0pt}{}{\psi_\beta}{0},
     \dots,
     \genfrac{}{}{0pt}{}{\psi_\beta}{0}
     \right\}.
     \end{split}
  \end{equation*}
 Then
 \begin{flalign}\label{eq3}
 (1)&\quad &
 d_n=&\sum\limits_{l=0}^nb_la_{n-l},\quad &n=1,2,\dots;&\quad&
\\
(2)&\qquad &
 d_n <&\frac{\beta^{\frac{-n^2/2+n}{4}}}{n!},\quad
 &n=1,2,\dots.&\quad&\nonumber
\end{flalign}
\end{lemma}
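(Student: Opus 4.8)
The plan is to reproduce the two-stage scheme already used for Lemma~\ref{auxlemma3}: first derive the convolution identity (1) from the additivity rule \eqref{additivity}, then evaluate the one-dimensional blocks $a_k,b_k$ in closed form and insert them into an arithmetic–geometric-mean estimate to obtain (2). Part (1) is simply the two-interval analogue of Lemma~\ref{auxLemma1}: here $\psi_\beta$ has the single fixed point $\psi_\beta(1/2)=1/2$ and only two monotone branches, so no ``middle'' block occurs and a single convolution survives. Concretely, I would split the outermost integral by \eqref{additivity} as $\int_0^1=\int_0^{1/2}+\int_{1/2}^{1}$. On $[0,1/2]$ one has $\psi_\beta=\psi_1\le 1/2$, so every inner variable stays in $[0,1/2]$ and this piece equals $b_0a_n=a_n$. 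On $[1/2,1]$ I would repeatedly split each successive inner integral as $\int_0^{\psi_\beta}=\int_0^{1/2}+\int_{1/2}^{\psi_\beta}$; since the limits of the first block are the constants $0$ and $1/2$, the remaining integrals decouple as the constant $a_{n-l}$ while the first $l$ of them contribute $b_l$, exactly as in the computation of $L_n,M_n$ in Lemma~\ref{auxLemma1}. Collecting terms gives $d_n=\sum_{l=0}^{n}b_la_{n-l}$.

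For part (2) the first simplification is the translation symmetry $b_k=a_k$: the substitution $y=x-1/2$ maps the $b_k$-integral onto the $a_k$-integral, because $\psi_2(x)-1/2=2^{\beta-1}(x-1/2)^\beta=\psi_1(x-1/2)$. Next I would compute $a_k$ in closed form. Writing $\tilde a_l(x):=\{\genfrac{}{}{0pt}{}{\psi_1}{0},\dots,\genfrac{}{}{0pt}{}{\psi_1}{0}\}$ ($l$ factors) and using $\tilde a_{l+1}(x)=\int_0^{\psi_1(x)}\tilde a_l(t)\,dt$, an induction on $l$ (the same one that produced $\tilde b_l$ in Lemma~\ref{auxlemma3}) gives $\tilde a_l(x)=C_lx^{p_l}$ with $p_l=\beta+\beta^2+\dots+\beta^l$. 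Integrating once more over $[0,1/2]$ and collapsing all powers of $2$ by means of the identity $(\beta-1)\sum_{j=0}^{k-2}(1+\beta+\dots+\beta^j)=p_{k-1}-(k-1)$, I expect the clean formula
\[
a_k=\frac{(1/2)^k}{(1+\beta)(1+\beta+\beta^2)\cdots(1+\beta+\dots+\beta^{k-1})},\qquad k\ge1,\ \ a_0=1.
\]

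The final step is the estimate. The arithmetic–geometric-mean inequality \eqref{arifmandgeom} applied factorwise gives $1+\beta+\dots+\beta^j\ge (j+1)\beta^{j/2}$, whence $\prod_{j=1}^{k-1}(1+\beta+\dots+\beta^j)\ge k!\,\beta^{(k^2-k)/4}$ and $a_k\le (1/2)^k\beta^{-(k^2-k)/4}/k!$. Feeding this into $d_n=\sum_{l=0}^{n}a_la_{n-l}$, for $1\le l\le n-1$ the $\beta$-exponent of $a_la_{n-l}$ is $-\tfrac14\bigl(l^2+(n-l)^2-n\bigr)\le-\tfrac14\bigl(n^2/2-n\bigr)$ because $l^2+(n-l)^2\ge n^2/2$; the combinatorial factor sums, by the binomial theorem, to $\sum_{l=1}^{n-1}(1/2)^n/(l!(n-l)!)=(1-2^{1-n})/n!$, so the interior terms are bounded by $(1-2^{1-n})\beta^{(-n^2/2+n)/4}/n!$. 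The two boundary terms $a_0a_n+a_na_0=2a_n$ are bounded by $2^{1-n}\beta^{-(n^2-n)/4}/n!\le 2^{1-n}\beta^{(-n^2/2+n)/4}/n!$, and these exactly restore the missing $2^{1-n}$, yielding $d_n<\beta^{(-n^2/2+n)/4}/n!$.

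I expect the main obstacle to be precisely this last estimate, where the $\beta$-exponents and the factorials must be controlled simultaneously so that the combinatorial sum telescopes to exactly $1/n!$ with constant $1$; the delicate points are the sharp inequality $l^2+(n-l)^2\ge n^2/2$ and the separate handling of the boundary terms $a_0a_n,a_na_0$, where the power-law bound on $a_k$ fails because $a_0=1$ rather than decaying. The closed form for $a_k$ and the symmetry $b_k=a_k$ are the enabling simplifications; the remaining bookkeeping is identical in spirit to that already carried out in Lemmas~\ref{auxLemma1} and~\ref{auxlemma3}.
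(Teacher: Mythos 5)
Your proposal is correct and follows essentially the same route as the paper: part (1) is the two-block (degenerate) case of Lemma \ref{auxLemma1}, which the paper obtains by substituting $\varepsilon_1=1/2$, $\varepsilon_2=1$ there, and part (2) rests on the same closed form $a_l=b_l=2^{-l}\bigl((1+\beta)\cdots(1+\beta+\dots+\beta^{l-1})\bigr)^{-1}$, the inequality \eqref{arifmandgeom}, and the binomial sum $\sum_l\binom{n}{l}=2^n$. Your separate treatment of the boundary terms $l=0,n$ is harmless but unnecessary, since the bound $a_k\le 2^{-k}\beta^{-(k^2-k)/4}/k!$ in fact holds (with equality) at $k=0$ and $k=1$, which is how the paper runs the whole sum uniformly.
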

\begin{proof}
Substituting $1/2$ for $\varepsilon_1$ and $1$ for $\varepsilon_2$
in  Lemma \ref{auxLemma1}, we get \eqref{eq3}.
 Further, it is not
hard to prove that $a_1=b_1=1/2$ and\\ $
a_l=b_l=2^{-l}\left((\beta+1)\cdot\dots\cdot(\beta^{l-1}+\dots+1)\right)^{-1}$
for $l\ge 2$. Now, by \eqref{arifmandgeom},
$a_l\le\frac{2^{-l}}{\beta^{(l-1)l/4}l!}$ and
  \begin{equation*}
   \begin{split}
d_n&\le\sum\limits_{l=0}^n\frac{2^{-l}}{\beta^{(l-1)l/4}l!}\frac{2^{-n+l}}{\beta^{(n-l-1)(n-l)/4}(n-l)!}\\
&=
\frac{2^{-n}}{n!}\sum\limits_{l=0}^n\frac{n!}{l!(n-l)!}\beta^{\frac{-2(l-n/2)^2-n^2/2+n}{4}}<
\frac{\beta^{\frac{-n^2/2+n}{4}}}{n!}.
  \end{split}
 \end{equation*}
 \end{proof}
\begin{proposition}\label{growthofD}
Let $\phi :  [0,1]\longrightarrow [0,1]$ be a nondecreasing
continuous function.

$(1)$ If $\phi(x)>x$ for  $x\in (0,1)$ then the order of
$D_{V_\phi}(\lambda)$ does not exceed $1$, and

\quad if it equals $1$, $D_{V_\phi}(\lambda)$ is of minimal type;

$(2)$ if for some  $0<a<b<1$
  $$
  \phi(x)\geq f_{a,b}(x):=
  \begin{cases}
  \frac{b}{a}x,& x\in [0,a],\\
  \frac{1-b}{1-a}x+\frac{b-a}{1-a},&x\in [a,b],
  \end{cases}
  $$
  for  $x\in [0,1]$, then the order of
$D_{V_\phi}(\lambda)$ equals $0$.
\end{proposition}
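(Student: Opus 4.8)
The plan is to read off the growth of $D_{V_\phi}(\lambda)=1+\sum_{n\ge1}(-1)^nA_n\lambda^n$ from its Taylor coefficients, which by Theorem \ref{Freddet1} are $A_n=\int_0^1\int_{\phi(t_1)}^1\cdots\int_{\phi(t_{n-1})}^1dt_n\cdots dt_1$. First I would make the substitution $s_j=1-t_{n+1-j}$, which turns the region $\{0\le t_1,\ \phi(t_{i-1})\le t_i\le1\}$ into $\{0\le s_1\le\tilde\phi(s_2),\ \ldots,\ s_{n-1}\le\tilde\phi(s_n),\ s_n\le1\}$, where $\tilde\phi(x):=1-\phi(1-x)$ is again continuous, nondecreasing, and satisfies $\tilde\phi(x)<x$ on $(0,1)$. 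Hence $A_n$ equals the iterated integral $d_n(\tilde\phi)$ of the type appearing in Lemmas \ref{auxLemma1}, \ref{auxlemma3}, \ref{auxLemma2}, and $\psi_1\le\psi_2$ on $[0,1]$ forces $d_n(\psi_1)\le d_n(\psi_2)$ since the domain only grows. Recalling the standard coefficient formulas (see \cite{Levin}) for the order $\rho$ and type $\tau$ of an entire function, namely $\rho=\limsup_n\frac{n\ln n}{\ln(1/A_n)}$ and, when $\rho=1$, $e\tau=\limsup_n nA_n^{1/n}$, everything reduces to estimating $d_n(\tilde\phi)$.

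For part $(1)$ I would fix $\varepsilon\in(0,1/4]$. Since $\tilde\phi(x)<x$ on the compact set $[\varepsilon,1-\varepsilon]\subset(0,1)$, the continuous function $x-\tilde\phi(x)$ attains there a positive minimum $\eta_\varepsilon$. Because $\psi_{\varepsilon,\beta}(x)\to x$ uniformly on $[\varepsilon,1-\varepsilon]$ as $\beta\to1^+$, I would pick $\beta>1$ so close to $1$ that $\psi_{\varepsilon,\beta}>x-\eta_\varepsilon\ge\tilde\phi$ there; on $[0,\varepsilon]\cup[1-\varepsilon,1]$ one has $\psi_{\varepsilon,\beta}(x)=x\ge\tilde\phi(x)$ outright, so $\tilde\phi\le\psi_{\varepsilon,\beta}$ on all of $[0,1]$. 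Monotonicity and Lemma \ref{auxlemma3} then give $A_n=d_n(\tilde\phi)\le d_n(\psi_{\varepsilon,\beta})<\mathrm{const}(\varepsilon,\beta)\,(4\varepsilon)^n/n!$. Feeding this into the coefficient formulas yields $\rho\le1$ at once, and $e\tau\le\limsup_n n\,(4\varepsilon)(n!)^{-1/n}=4\varepsilon e$, i.e. $\tau\le4\varepsilon$; letting $\varepsilon\downarrow0$ shows that if $\rho=1$ the type must be $0$.

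For part $(2)$ the hypothesis $\phi\ge f_{a,b}$ gives, after reflection, $\tilde\phi\le\tilde f$, where $\tilde f(x)=\tfrac{1-b}{1-a}x$ on $[0,1-a]$ and $\tilde f(x)=1-\tfrac{b}{a}(1-x)$ on $[1-a,1]$; thus $\tilde f$ is a contraction with an attracting fixed point at $0$ of slope $\sigma:=\tfrac{1-b}{1-a}<1$, a repelling fixed point at $1$ of slope $b/a>1$, and corner value $\tilde f(1-a)=1-b<1-a$. I would bound $d_n(\tilde f)$ along the lines of Lemma \ref{auxLemma2}: the interval $[0,1-a]$ is forward-invariant (once a variable drops below $1-a$ it, and all later ones, stay below $1-b$), so splitting the domain at $1-a$ through the additivity \eqref{additivity} writes $d_n(\tilde f)$ as a convolution of the ``bottom'' integrals over $[0,1-a]$ with the ``top'' integrals over $[1-a,1]$, exactly as in Lemma \ref{auxLemma1}. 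A direct computation shows the bottom integrals for the linear map $\sigma x$ are of order $\sigma^{k(k-1)/2}/k!$, which beats every factorial, while the top integrals are controlled by the repelling factor $b/a$, which caps the length of a run near $1$. Combining them should give $d_n(\tilde f)\le C(a,b)^n\,\sigma^{cn^2}/n!$ for some $c>0$, whence $\ln(1/A_n)\gtrsim n^2$ and $\rho=0$.

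The routine parts are the substitution, the monotonicity, and the coefficient-to-order dictionary. The real work lies in the two domination steps. In $(1)$ the only delicate point is the uniform convergence $\psi_{\varepsilon,\beta}\to x$ as $\beta\to1^+$ combined with the positivity of $\eta_\varepsilon$, which is what lets a single $\varepsilon$ be pushed to $0$; I expect this to be straightforward. The genuine obstacle is $(2)$: because the fixed points of $\tilde f$ are merely hyperbolic rather than super-attracting, $\tilde f$ is \emph{not} dominated by any $\psi_\beta$ of Lemma \ref{auxLemma2}, so that lemma cannot simply be quoted. One must instead redo its convolution estimate for the piecewise-linear $\tilde f$, and the crux is to show that the repelling end near $1$ contributes at most a geometric factor in $n$ and therefore cannot destroy the super-exponential decay $\sigma^{cn^2}/n!$ produced by the attracting end.
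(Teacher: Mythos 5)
Your treatment of part $(1)$ is correct and is essentially the paper's own argument: the paper likewise reduces everything to the coefficient bound $A_n<\mathrm{const}(\varepsilon,\beta)\,(4\varepsilon)^n/n!$ of Lemma \ref{auxlemma3} and then lets $\varepsilon\downarrow 0$ to kill the type; it merely phrases the comparison as $\phi\ge\psi_{\varepsilon,\beta}^{-1}$ (so that $A_n(\phi)\le A_n(\psi_{\varepsilon,\beta}^{-1})=d_n(\psi_{\varepsilon,\beta})$ after reversing the order of the variables) instead of your reflection $\tilde\phi(x)=1-\phi(1-x)\le\psi_{\varepsilon,\beta}(x)$. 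The two bookkeepings are interchangeable, and your uniform-convergence argument for the existence of a suitable $\beta$ supplies a step that the paper only asserts.

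For part $(2)$ you have put your finger on a genuine soft spot: the paper's entire proof of $(2)$ is the assertion that $\phi\ge f_{a,b}$ implies $\phi\ge\psi_{\beta}^{-1}$ for some $\beta>1$, and this implication is false --- near $0$ one has $f_{a,b}(x)=(b/a)x$ while $\psi_\beta^{-1}(x)=2^{(1-\beta)/\beta}x^{1/\beta}$, which dominates every linear function as $x\to 0$ --- so Lemma \ref{auxLemma2} indeed cannot simply be quoted, exactly as you say. However, the mechanism you propose at what you call the crux is itself flawed: if the block of variables near the repelling fixed point contributed only ``a geometric factor in $n$,'' then the term of the two-block convolution in which \emph{all} $n$ variables sit in that block would be merely geometrically small, and the order would not come out $0$. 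What saves the argument is that the repelling end produces the same super-exponential decay as the attracting end: if $t_1,\dots,t_k\in[1-a,1]$ and $t_{i-1}\le\tilde f(t_i)=1-(b/a)(1-t_i)$, then telescoping gives $1-t_i\le a\,(a/b)^{i-1}$, so the volume of a length-$k$ run in the top block is at most $a^k(a/b)^{k(k-1)/2}$. Combining this with your bound $\sigma^{j(j-1)/2}(1-a)^j/j!$ for the bottom block yields $\ln(1/A_n)\gtrsim n^2$ and hence order $0$. With that correction your outline does close the gap; note also that the breakpoint $1-a$ of $\tilde f$ is not a fixed point, so the splitting into two blocks must be justified directly by the forward-invariance you mention ($t_i\le 1-a$ forces $t_{i-1}\le 1-b$) rather than by citing Lemma \ref{auxLemma1}, whose hypotheses require the breakpoints to be fixed.
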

\begin{proof}
$(1)$ Taking into account  Theorem \ref{Freddet1}, we obtain
$D_{V_\phi}(\lambda)=1+\sum\limits_{n=1}^\infty(-1)^nA_n\lambda^n$,
where
 $A_n=\left\{
     \genfrac{}{}{0pt}{}{1}{0},
     \genfrac{}{}{0pt}{}{1}{\phi},
     \dots,
     \genfrac{}{}{0pt}{}{1}{\phi}
     \right\}$.
Since $\phi(x)>x$ for each $0<\varepsilon<1/4$, it follows that
there exists $\beta>1$ such that
$\phi(x)\ge\psi_{\varepsilon,\beta}^{-1}(x)$. Using Lemma
\ref{auxlemma3}, we get
\begin{equation*}
 \begin{split}
     A_n=d_n&=\left\{
     \genfrac{}{}{0pt}{}{1}{0},
     \genfrac{}{}{0pt}{}{1}{\phi},
     \dots,
     \genfrac{}{}{0pt}{}{1}{\phi}
     \right\}<\left\{
     \genfrac{}{}{0pt}{}{1}{0},
     \genfrac{}{}{0pt}{}{1}{\psi_{\varepsilon,\beta}^{-1}},
     \dots,
     \genfrac{}{}{0pt}{}{1}{\psi_{\varepsilon,\beta}^{-1}}\right\}\\
&=
\left\{
     \genfrac{}{}{0pt}{}{1}{0},
     \genfrac{}{}{0pt}{}{\psi_{\varepsilon,\beta}}{0},
     \dots,
     \genfrac{}{}{0pt}{}{\psi_{\varepsilon,\beta}}{0}
     \right\}<const(\varepsilon,\beta)\frac{(4\varepsilon)^n}{n!}.
\end{split}
\end{equation*}
 Therefore the order of growth of $D_{V_\phi}(\lambda)$
does not exceed $1$. Assume that  the order of growth of
$D_{V_\phi}(\lambda)$ is equal to $1$. Then the type of
$D_{V_\phi}(\lambda)$ does not exceed $4\varepsilon$ for each
$\varepsilon<1/4$. Thus $D_{V_\phi}(\lambda)$ is of minimal type.

$(2)$ Since $\phi(x)\ge f_{a,b}(x)$ for some  $0<a<b<1$, it
follows that there exists $\beta>1$ such that
$\phi(x)\ge\psi_{\beta}^{-1}(x)$. Using Lemma \ref{auxLemma2}, we
get
\begin{equation*}
 \begin{split}
     A_n=d_n&=\left\{
     \genfrac{}{}{0pt}{}{1}{0},
     \genfrac{}{}{0pt}{}{1}{\phi},
     \dots,
     \genfrac{}{}{0pt}{}{1}{\phi}
     \right\}<\left\{
     \genfrac{}{}{0pt}{}{1}{0},
     \genfrac{}{}{0pt}{}{1}{\psi_{\beta}^{-1}},
     \dots,
     \genfrac{}{}{0pt}{}{1}{\psi_{\beta}^{-1}}\right\}\\
&=
\left\{
     \genfrac{}{}{0pt}{}{1}{0},
     \genfrac{}{}{0pt}{}{\psi_{\beta}}{0},
     \dots,
     \genfrac{}{}{0pt}{}{\psi_{\beta}}{0}
     \right\}<\frac{\beta^{\frac{-n^2/2+n}{4}}}{n!}.
\end{split}
\end{equation*}
 Therefore the order of growth of $D_{V_\phi}(\lambda)$
equals $0$.
\end{proof}
  \begin{theorem}\label{Theoreminfinite1}
Let $\phi :  [0,1]\longrightarrow [0,1]$ be a nondecreasing
continuous function such that  $\phi(x)>x$ for all $x\in (0,1)$.
Suppose that either $\phi(0)=0$ or $\phi(1-\varepsilon)\ne 1$ for
all $0<\varepsilon<1$.
 Then

  $(1)$ $\sigma_p(V_\phi)\setminus\{0\}:=(\lambda_1,\dots,\lambda_n,\dots)$ --- is an infinite set;

  $(2)$  $\lim\limits_{\varepsilon\rightarrow
  0}\sum\limits_{|\lambda_n|>\varepsilon}\lambda_n=1$;

  $(3)$ $\sum\limits_{n=1}^\omega|\lambda_n|^{1+\varepsilon}<\infty$ for all
  $\varepsilon>0$.
\end{theorem}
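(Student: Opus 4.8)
The plan is to read off all three statements from the Fredholm determinant $f(\lambda):=D_{V_\phi}(\lambda)$ together with the structure theorem for entire functions of order at most $1$. By Theorem \ref{Freddet1} we have $f(\lambda)=1+\sum_{n\ge 1}(-1)^nA_n\lambda^n$ with $A_n=\int_0^1\int_{\phi(t_1)}^1\cdots\int_{\phi(t_{n-1})}^1dt_n\cdots dt_1\ge 0$, so that $f(0)=1$ and $f'(0)=-A_1=-1$. By property (iii) of Subsection 2.2 the zeros $\{a_n\}$ of $f$ are exactly the reciprocals $a_n=1/\lambda_n$ of the nonzero eigenvalues, with matching multiplicities. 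By Proposition \ref{growthofD}$(1)$ the order satisfies $\rho_f\le 1$, and $\rho_f=1$ forces minimal type, i.e.\ $\sigma_f=0$. Hence $f$ falls under one of the cases (i), (ii), (iv) of Theorem \ref{EntireFunction}, and each of $(1)$--$(3)$ will follow by translating a statement about $\{a_n\}$ into one about $\{\lambda_n\}$.

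For $(1)$ I must show the eigenvalue set is infinite, i.e.\ that $f$ is not a polynomial; this is the heart of the matter. I claim $A_n>0$ for every $n$. Introduce the tail functions $P_1(x):=1-x$ and $P_k(x):=\int_x^1 P_{k-1}(\phi(t))\,dt$, so that $A_n=P_n(0)$. By induction on $k$ one proves
\[
P_k(x)>0\quad\Longleftrightarrow\quad \phi^{k-1}(x)<1 ,
\]
the step using that, by the inductive hypothesis, $P_{k-1}(\phi(t))>0$ exactly when $\phi^{k-1}(t)<1$, and that $\{t:\phi^{k-1}(t)<1\}$ is an interval $[0,s)$ which meets $(x,1)$ in positive measure precisely when $\phi^{k-1}(x)<1$. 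It then remains to verify $\phi^{n-1}(0)<1$ for all $n$ under the hypothesis. If $\phi(0)=0$ then $\phi^{n-1}(0)=0<1$; if instead $\phi(1-\varepsilon)\ne 1$ for every $\varepsilon$, then $\phi$ maps $[0,1)$ into $[0,1)$, so the orbit of $0$ never reaches $1$ and again $\phi^{n-1}(0)<1$. (This is the opposite side of Lemma \ref{auxLemma4}: the excluded configuration $\phi(0)>0$, $\phi(1-\varepsilon)=1$ is exactly the one in which the orbit reaches $1$ and $A_n$ eventually vanishes.) Thus every coefficient $(-1)^nA_n$ of $f$ is nonzero, $f$ is not a polynomial, and $\omega=\infty$: directly in case (i), because a finite product would make $f$ a polynomial in case (ii), and because the product is genuinely infinite in case (iv).

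Parts $(2)$ and $(3)$ are then formal. Since $|\lambda_n|>\varepsilon\iff|a_n|<1/\varepsilon$, the symmetric sum in $(2)$ equals $\lim_{r\to\infty}\sum_{|a_n|<r}1/a_n$. In cases (i) and (ii) this series converges absolutely (the exponent of convergence is at most $\rho_f\le 1$), so the limit is the full sum $-f'(0)=A_1=1$; in case (iv) the relation $|a+\sum_{|a_n|<r}1/a_n|\to 0$ shows the limit exists and equals $-a=-f'(0)=1$. This proves $(2)$. For $(3)$, note $|\lambda_n|^{1+\varepsilon}=|a_n|^{-(1+\varepsilon)}$, whence $\sum_{n}|\lambda_n|^{1+\varepsilon}=\sum_n|a_n|^{-(1+\varepsilon)}<\infty$ for every $\varepsilon>0$ directly by Theorem \ref{EntireFunction}$(v)$. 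The one genuine obstacle is establishing $A_n>0$ in part $(1)$; once the recursion for $P_k$ and the dichotomy on $\phi(0)$ versus $\phi$ near $1$ are in hand, the remaining assertions are immediate appeals to the entire-function theorem.
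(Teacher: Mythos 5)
Your proposal is correct and follows essentially the same route as the paper: express $D_{V_\phi}$ via Theorem \ref{Freddet1}, show it is not a polynomial because $A_n>0$ for all $n$, invoke Proposition \ref{growthofD}$(1)$ to place it in cases (i), (ii), (iv) of Theorem \ref{EntireFunction}, and read off $(1)$--$(3)$. The only difference is that you supply a detailed inductive proof (via the tail functions $P_k$) of the positivity $A_n>0$, which the paper simply declares ``easy to see.''
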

\begin{proof}
Using Theorem \ref{Freddet1}, we get
$D_{V_\phi}(\lambda)=1+\sum\limits_{n=1}^\infty(-1)^nA_n\lambda^n$,
where  $A_n=\left\{
     \genfrac{}{}{0pt}{}{1}{0},
     \genfrac{}{}{0pt}{}{1}{\phi},
     \dots,
     \genfrac{}{}{0pt}{}{1}{\phi}
     \right\}$.
It is easy to see that if either $\phi(0)=0$ or
$\phi(1-\varepsilon)\ne 1$ for all $0<\varepsilon<1$, then $A_n>0$
for $n\ge 0$. Therefore $D_{V_\phi}(\lambda)$ is not a polynomial
in $\lambda$. Now we apply Proposition \ref{growthofD} $(1)$.
Suppose that the order of $D_{V_\phi}(\lambda)$ is less than $1$;
then using Theorem \ref{EntireFunction} $($ii$)$, we get
$D_{V_\phi}(\lambda)=\prod\limits_{n=1}^\omega(1-\frac{\lambda}{a_n})$.
Since $D_{V_\phi}(\lambda)$ is not a polynomial, it follows that
$\omega=\infty$ and
$\sum\limits_{n=1}^\infty\lambda_n=\sum\limits_{n=1}^\infty\frac{1}{a_n}=-A_1/A_0=1$.
Now suppose that  the order of $D_{V_\phi}(\lambda)$ is  equal to
$1$; then $D_{V_\phi}(\lambda)$ is of minimal type. Thus the
spectrum of $V_\phi$ is an infinite set. Now, the application of
Theorem \ref{EntireFunction} $($i$)$, $($iv$)$ yields $(2)$.

$(3)$ follows from Theorem \ref{EntireFunction}.
\end{proof}

Now we are ready to prove the main result of the paper\\
$Proof\  of\ Theorem\ \ref{main1}$

$(1)$  follows from Theorem \ref{Th4.7} $(1)$ and Theorem
\ref{Theoreminfinite1} $(1)$.

$(2)$-$(3)$ follow from Theorem \ref{Th4.7} $(2)$ and Theorem
\ref{Theoreminfinite1} $(2)$-$(3)$.\qed

  \begin{theorem}\label{fab}
Let $\phi :  [0,1]\longrightarrow [0,1]$ be a nondecreasing
continuous function and
 for some $0<a<b<1$
  $$
  \phi(x)\geq
  \begin{cases}
  \frac{b}{a}x,& x\in [0,a],\\
  \frac{1-b}{1-a}x+\frac{b-a}{1-a},&x\in [a,b],
  \end{cases}
  $$
 for all $x\in [0,1]$.
Let also either $\phi(0)=0$ or $\phi(1-\varepsilon)\ne 1$ for all
$0<\varepsilon<1$.
 Then

$(1)$
$\sigma_p(V_\phi)\setminus\{0\}:=(\lambda_1,\dots,\lambda_n,\dots)$
--- is an infinite set;

$(2)$ $\sum\limits_{n=1}^\infty\lambda_n=1$;

$(3)$ $\sum\limits_{n=1}^\infty|\lambda_n|^\varepsilon<\infty$ for
all $\varepsilon>0$.
\end{theorem}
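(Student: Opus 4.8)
The plan is to mirror the proof of Theorem~\ref{Theoreminfinite1}, replacing the bound ``order $\le 1$'' by the sharper information that the order is exactly $0$. The domination $\phi\ge f_{a,b}$ is verbatim the hypothesis of Proposition~\ref{growthofD}~$(2)$, which tells us that the Fredholm determinant $D_{V_\phi}(\lambda)$ is entire of order $\rho=0$. Since $0<1$, parts $(ii)$ and $(iii)$ of Theorem~\ref{EntireFunction} become available, and they deliver all three conclusions as soon as we know that $D_{V_\phi}(\lambda)$ is not a polynomial.

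First I would write down, using Theorem~\ref{Freddet1},
\[
D_{V_\phi}(\lambda)=1+\sum_{n=1}^\infty(-1)^nA_n\lambda^n,\qquad
A_n=\int_0^1\int_{\phi(t_1)}^1\cdots\int_{\phi(t_{n-1})}^1 dt_n\cdots dt_1,
\]
so that $A_n$ is the Lebesgue measure of $\Omega_n=\{\,0\le t_1\le\phi(t_1)\le\cdots\le\phi(t_{n-1})\le t_n\le1\,\}$, and $A_0=1$, $A_1=1$. As in Theorem~\ref{Theoreminfinite1} one verifies that $A_n>0$ for every $n$: the inner integral at a fixed $t_1$ is positive exactly when $\phi^{\,n-1}(t_1)<1$, so $A_n>0$ iff $\phi^{\,n-1}<1$ on a set of positive measure. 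If $\phi(0)=0$ then $\phi^{\,n-1}(0)=0$, and by continuity of the iterate $\phi^{\,n-1}(t_1)<1$ on a neighbourhood of $0$; if instead $\phi(1-\varepsilon)\ne1$ for all $\varepsilon\in(0,1)$ then $\phi(x)<1$ for every $x<1$, whence $\phi^{\,n-1}(x)<1$ for all $x\in[0,1)$. Either way $A_n>0$, so $D_{V_\phi}$ has infinitely many nonzero coefficients and is not a polynomial. I would stress that neither this step nor Proposition~\ref{growthofD}~$(2)$ uses the condition $\phi(x)>x$ on all of $(0,1)$, so the weaker hypotheses here suffice.

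The three assertions then follow mechanically. By Theorem~\ref{EntireFunction}~$(ii)$ (valid since $\rho<1$) we have $D_{V_\phi}(\lambda)=\prod_{n=1}^\omega(1-\lambda/a_n)$; non-polynomiality forces $\omega=\infty$, which with the root--eigenvalue correspondence $\lambda_n=1/a_n$ recalled in Section~2.2 gives $(1)$. The same part yields $\sum_{n=1}^\infty 1/a_n=-D_{V_\phi}'(0)=A_1=1$, and Theorem~\ref{EntireFunction}~$(iii)$ with $\varepsilon=1$ gives $\sum 1/|a_n|<\infty$; hence $\sum_{n=1}^\infty\lambda_n$ converges absolutely to $1$, which is $(2)$ (the clean equality, in place of the limit form of Theorem~\ref{Theoreminfinite1}). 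Finally $(3)$ is immediate from Theorem~\ref{EntireFunction}~$(iii)$: because $\rho=0$, one has $\sum_n1/|a_n|^\varepsilon=\sum_n|\lambda_n|^\varepsilon<\infty$ for every $\varepsilon>0$.

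The genuinely hard analytic input---the factorial-type estimate $d_n<\beta^{(-n^2/2+n)/4}/n!$ of Lemma~\ref{auxLemma2} that forces $\rho=0$---is already carried out in Proposition~\ref{growthofD}~$(2)$, so nothing new is required there. Within the present theorem the only point demanding attention is checking that both reductions (order $0$ and non-polynomiality) survive the removal of the blanket assumption $\phi(x)>x$; as noted, each rests only on monotonicity, continuity, and the single-sided domination $\phi\ge f_{a,b}$, so they do.
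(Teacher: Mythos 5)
Your argument is correct and follows essentially the same route as the paper: the paper's proof simply cites Theorem \ref{Theoreminfinite1}~$(1)$ for part $(1)$ (whose positivity-of-$A_n$ argument you have inlined), invokes Proposition \ref{growthofD}~$(2)$ for order $0$, and then applies Theorem \ref{EntireFunction}~$(ii)$ and $(iii)$ exactly as you do. Your added observation that $\phi\ge f_{a,b}$ already forces $\phi(x)>x$ on $(0,1)$, so that the cited results apply, is a harmless (and slightly more careful) elaboration of the same proof.
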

\begin{proof}
$(1)$  follows from Theorem \ref{Theoreminfinite1} $(1)$. By
Proposition \ref{growthofD} $(2)$, the order of
$D_{V_\phi}(\lambda)$ equals $0$. Thus $(2)$ and $(3)$ are implied
by (ii) and (iii) of Theorem \ref{EntireFunction}.
\end{proof}
\begin{remark}
$(i)$ Suppose $\phi(x)$ is a strictly increasing function and
$\phi(x)>x$ for all $x\in (0,1)$. Let also $\phi(x)\in C^1[0,1]$
and $(\phi'(x))^{-1/2}\in L_\infty[0,1]$. We claim that
$V_{\phi}\not\in\mathbf{S_1}$. Indeed, let $c:=\left(\int\limits_0^1(\phi '(s))^{1/2}ds\right)^{-1}$ and 
let $W_\phi$ and $T_\phi$
be linear operators defined on $L_2[0,1]$ by
$$
(W_\phi f)(x)=\int\limits_0^x(\phi '(t))^{1/2}f(t)dt,\quad (T_\phi
f)(x)=f(c\int\limits_0^x(\phi '(s))^{1/2}ds).
$$
It can easily be checked (see \cite{Domanov2}-\cite{Domanov3}) that
$T_\phi$ and $T_\phi^{-1}$ are bounded operators and
$cV_x=T_\phi^{-1}W_\phi T_\phi$. Hence, (see \cite{Gohberg and
Krein2}) $s_n(W_\phi)\ge
\|T_\phi\|^{-1}\|T_\phi^{-1}\|^{-1}s_n(cV_x)=\|T_\phi\|^{-1}\|T_\phi^{-1}\|^{-1}c\frac{2}{(2n-1)\pi}$.
Further,
$$
(V_\phi
V_\phi^*f)(x)=\int\limits_0^{\phi(x)}\int\limits_{\phi^{-1}(t)}^{1}f(s)dsdt=
\int\limits_0^x\phi '(t)\int\limits_t^{1}f(s)dsdt=(W_\phi
W_\phi^*f)(x).
$$
Thus $s_n(V_\phi)=s_n(W_\phi)\ge
\|T_\phi\|^{-1}\|T_\phi^{-1}\|^{-1}c\frac{2}{(2n-1)\pi}$. Hence,
$V_{\phi}\not\in\mathbf{S_1}$.

 $(ii)$ Since $V_{\phi}\not\in\mathbf{S_1}$, it
follows that the matrix trace of an operator $V_{\phi}$ is not
defined. Hence we cannot use  \eqref{Lidskii}-\eqref{trace} to
prove Theorem \ref{fab} (2). Nevertheless,
\eqref{Lidskii}-\eqref{trace} hold for $K=V_\phi$ and the
orthonormal basis $\{e_n\}_{n=1}^\infty$ defined
 by: $e_1\equiv 1$, $e_{2n}:=e^{2\pi i nx}$
and $e_{2n+1}:=e^{-2\pi i nx}$ ($n=1,2,\dots$). Indeed, since
$\sum\limits_{n=1}^\infty\frac{\sin nx}{n}=\frac{\pi-x}{2}$ for
$x\in (0,2\pi)$, it follows that
\begin{equation*}
\begin{split}
&\sum\limits_{n=0}^\infty(V_\phi
e_n,e_n)=
\int\limits_0^1\phi(x)dx\\
&+\sum\limits_{n=1}^\infty\left(\int\limits_0^1\frac{(e^{2\pi
in\phi(x)}-1)e^{-2\pi inx}}{2\pi
in}dx+\int\limits_0^1\frac{(e^{-2\pi in\phi(x)}-1)e^{2\pi
inx}}{-2\pi in}dx\right)\\
&=\int\limits_0^1\phi(x)dx+\sum\limits_{n=1}^\infty\int\limits_0^1\frac{\sin(2\pi
n(\phi(x)-x))}{\pi
n}dx\\
&=\int\limits_0^1\phi(x)dx+\int\limits_0^1\frac{1}{\pi}\frac{(\pi-2\pi(\phi(x)-x))}{2}dx=1.
\end{split}
\end{equation*}
Further, $\int\limits_0^1\chi(\phi(x)-x)dx=1$. Thus formulas
\eqref{Lidskii}-\eqref{trace} hold. This  contrasts with the fact
that $\sum\limits_{n=0}^\infty(V_x e_n,e_n)=\infty$.

$($iii$)$  Theorem \ref{main1} states that the spectral trace of an
operator $V_{\phi}$ always  equals $1$. This  also contrasts with
the fact that an  operator $V_x$ is quasinilpotent.
\end{remark}

To estimate the spectral radius $r(V_\phi)$ of the operator
$V_\phi$  we recall (see \cite{Zabreiko})  some results on
integral operators with nonnegative kernels. Let
$(Kf)(x)=\int\limits_0^1k(x,t)f(t)dt$ and $k(x,t)\ge 0$ for
$(x,t)\in [0,1]\times [0,1]$. If there exist  $\alpha>0$ and a
nonnegative function $f$ such that $(Kf)(x)\ge \alpha f(x)$ for
$x\in [0,1]$, then $r(K)\in \sigma_p(K)$ and $r(K)>\alpha$.
\begin{proposition}Let $\phi :  [0,1]\longrightarrow [0,1]$ be a strictly increasing
continuous function  such that  $\phi(x)\ge x$ for all $x\in
[0,1]$. Set also $\sigma_p(V_\phi)=\{\lambda_n\}_{n=1}^\omega$
$(\omega \le\infty )$. Then

$(1)$ $r(V_\phi)\ge \max\limits_{x\in [0,1]}(\phi(x)-x)$, \ \
$r(V_\phi)\in\sigma_p(V_\phi)$.

Let also $\phi(0)=0$. Then $\omega=\infty$ and

$(2)$
 $\sum\limits_{n=1}^\infty\lambda_n^2=2\int\limits_0^1
\phi(t)dt-1$;

$(3)$
  $\sum\limits_{n=1}^\infty\lambda_n^3=1-3\int\limits_0^1\phi(t)\phi^{-1}(t)dt$.
\end{proposition}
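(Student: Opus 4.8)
The plan is to separate part (1) from the trace identities (2)–(3), which carry the real computational weight.

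For (1) I would invoke the quoted criterion for integral operators with nonnegative kernel (\cite{Zabreiko}): it suffices to produce $\alpha>0$ and a nonnegative $f\in L_2[0,1]$ with $(V_\phi f)(x)\ge\alpha f(x)$ on $[0,1]$, which then yields $r(V_\phi)\in\sigma_p(V_\phi)$ and $r(V_\phi)\ge\alpha$. Let $x_0$ be a point where $\phi(x)-x$ attains its maximum $\alpha:=\max_x(\phi(x)-x)$, assume $\alpha>0$ (the case $\phi\equiv\mathrm{id}$ being trivial, with $r=0$), and take $f:=\chi_{[x_0,\phi(x_0)]}$. Then $(V_\phi f)(x)=\int_0^{\phi(x)}\chi_{[x_0,\phi(x_0)]}(t)\,dt$ is the length of $[x_0,\phi(x_0)]\cap[0,\phi(x)]$; for $x\in[x_0,\phi(x_0)]$ monotonicity gives $\phi(x)\ge\phi(x_0)$, so $(V_\phi f)(x)=\phi(x_0)-x_0=\alpha=\alpha f(x)$, while off the support $f$ vanishes and the kernel is nonnegative. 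This proves (1) with $\alpha=\max_x(\phi(x)-x)$.

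For (2)–(3) I would first record that $V_\phi\in\mathbf{S_2}$ (its $0/1$-kernel is bounded, with $\|V_\phi\|_{HS}^2=\int_0^1\phi(x)\,dx$), so $V_\phi^2,V_\phi^3\in\mathbf{S_1}$; since the eigenvalues of $V_\phi^k$ are $\{\lambda_n^k\}$, Lidskii's formula \eqref{Lidskii} gives $\sum_n\lambda_n^2=\mathbf{tr}\,V_\phi^2$ and $\sum_n\lambda_n^3=\mathbf{tr}\,V_\phi^3$. From $\phi(0)=0$ and $\phi(x)\ge x$ one gets $\phi(1)=1$, so $\phi$ is a continuous increasing bijection of $[0,1]$, the iterated kernels are continuous, and \eqref{trace} computes these traces as diagonal integrals. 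The kernel of $V_\phi^2$ is $k_2(x,s)=\int_0^1\chi(\phi(x)-t)\chi(\phi(t)-s)\,dt=\max(0,\phi(x)-\phi^{-1}(s))$, with diagonal $\phi(x)-\phi^{-1}(x)$; hence $\mathbf{tr}\,V_\phi^2=\int_0^1(\phi(x)-\phi^{-1}(x))\,dx$, and the area identity $\int_0^1\phi^{-1}=1-\int_0^1\phi$ gives (2).

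For (3) I would write $\mathbf{tr}\,V_\phi^3=\mathrm{vol}\{(x_1,x_2,x_3)\in[0,1]^3:\ x_2\le\phi(x_1),\ x_3\le\phi(x_2),\ x_1\le\phi(x_3)\}=\mathrm{vol}(A\cap B\cap C)$ for the three cyclic constraints $A,B,C$, and apply inclusion–exclusion to the complements. Each $\mathrm{vol}(A^c)=1-\int_0^1\phi$; the three pairwise volumes agree by the cyclic symmetry of the cube, and a short Fubini computation gives $\mathrm{vol}(A^c\cap B^c)=\int_0^1\phi^{-1}(s)(1-\phi(s))\,ds=(1-\int_0^1\phi)-\int_0^1\phi\,\phi^{-1}$; crucially the triple complement is empty, since $x_2>\phi(x_1)\ge x_1$, $x_3>\phi(x_2)\ge x_2$, $x_1>\phi(x_3)\ge x_3$ force the impossible chain $x_1>x_3>x_2>x_1$ (this is exactly where $\phi(x)\ge x$ enters). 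Assembling, $\mathbf{tr}\,V_\phi^3=1-3(1-\int\phi)+3[(1-\int\phi)-\int\phi\,\phi^{-1}]=1-3\int_0^1\phi\,\phi^{-1}$, which is (3). Finally $\omega=\infty$: all $A_n>0$ so $D_{V_\phi}(\lambda)$ is not a polynomial, and being of order $\le 1$ and (when of order $1$) minimal type, it must have infinitely many zeros, as in Theorem \ref{Theoreminfinite1}(1) (the degenerate case $\phi\equiv\mathrm{id}$, with empty point spectrum, is excluded by $\alpha>0$). The main obstacle I expect is the $\mathbf{tr}\,V_\phi^3$ step: correctly setting up the cyclic region, collapsing the three pairwise terms via symmetry, and verifying the emptiness of the triple-complement region; by contrast the $V_\phi^2$ trace and part (1) are routine.
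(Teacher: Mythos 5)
Your proof is correct; parts (1) and (2) coincide in substance with the paper's, while part (3) takes a genuinely different route. For (1) the paper applies the same positivity criterion from \cite{Zabreiko} but with the test function $f_a=\chi_{(a,1]}$, for which $(V_\phi f_a)(x)=\max(0,\phi(x)-a)\ge(\phi(a)-a)f_a(x)$; your $\chi_{[x_0,\phi(x_0)]}$ works just as well. For (2) the paper writes the kernel of $V_\phi^2$ as $\chi(\phi^2(x)-t)(\phi(x)-\phi^{-1}(t))$, which is exactly your $\max(0,\phi(x)-\phi^{-1}(t))$, and integrates the diagonal via \eqref{trace}. For (3) the paper computes the iterated kernel $k_3(x,t)=\chi(\phi^3(x)-t)\int_{\phi^{-2}(t)}^{\phi(x)}(\phi(s)-\phi^{-1}(t))\,ds$ explicitly and evaluates $\int_0^1k_3(t,t)\,dt$ by substitutions, whereas you identify $\mathbf{tr}\,V_\phi^3$ with the volume of the cyclic region $\{x_2\le\phi(x_1),\ x_3\le\phi(x_2),\ x_1\le\phi(x_3)\}$ and run inclusion--exclusion, using the cyclic coordinate shift to equate the three pairwise terms and the hypothesis $\phi(x)\ge x$ to show the triple complement is empty. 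Your version is cleaner and makes visible where monotonicity enters; the paper's version has the minor advantage of producing the continuous kernel $k_3$ needed to justify \eqref{trace} (you should still note that your volume integrand is the diagonal of a continuous iterated kernel, or argue via $V_\phi\in\mathbf{S_2}$ and Lidskii as you do). Two small caveats, neither fatal: your appeal to Proposition \ref{growthofD} and Theorem \ref{Theoreminfinite1}(1) for $\omega=\infty$ formally requires $\phi(x)>x$ on $(0,1)$, while the proposition assumes only $\phi(x)\ge x$ (the paper leaves $\omega=\infty$ unproved here; the case of interior fixed points would have to go through Theorem \ref{Freddet2}); and in the degenerate case $\phi\equiv\mathrm{id}$ the claim $r(V_\phi)\in\sigma_p(V_\phi)$ fails since $\sigma_p(V)=\emptyset$ --- a defect of the statement itself that you at least flag and the paper does not.
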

\begin{proof}
$(1)$ Let $f_a(x)=1-\chi(a-x)$, $a\in (0,1)$ then
$$
(V_\phi f_a)(x)=
\begin{cases}
0,& [0,\phi^{-1}(a)]\\
\phi(x)-a,& [\phi^{-1}(a),1]
\end{cases}
\ge (\phi(a)-a)f_a(x),
$$
and $(1)$ is proved.

$(2)$, $(3)$ It is easy to check that $\phi^{-1}(x)$ is well
defined and
$$
(V_\phi^2f)(x)=
\int\limits_0^1\chi(\phi^2(x)-t)(\phi(x)-\phi^{-1}(t))f(t)dt=:\int\limits_0^1k_2(x,t)f(t)dt.
$$
$$
(V_\phi^3f)(x)=
\int\limits_0^1\chi(\phi^3(x)-t)\int\limits_{\phi^{-2}(t)}^{\phi(x)}(\phi(s)-\phi^{-1}(t))dsf(t)dt=:\int\limits_0^1k_3(x,t)f(t)dt.
$$
Further, $k_2(x,t)$ and $k_3(x,t)$ are continuous functions on
$[0,1]\times [0,1]$. Hence, $V_\phi^2\in\mathbf{S_1}$ and
$V_\phi^3\in\mathbf{S_1}$. Now if we recall \eqref{trace}, we get
$$
\sum\limits_{n=1}^\infty\lambda_n^2=\int\limits_0^1k_2(t,t)dt=\int\limits_0^1(\phi(t)-\phi^{-1}(t))dt=2\int\limits_0^1
\phi(t)dt-1,
$$
$$
\sum\limits_{n=1}^\infty\lambda_n^3=
\int\limits_0^1k_3(t,t)dt=\int\limits_0^1
\int\limits_{\phi^{-2}(t)}^{\phi(t)}(\phi(s)-\phi^{-1}(t))ds
$$
$$=
\int\limits_0^1\left(\phi(t)\phi^2(t)-2\phi^{-1}(t)\phi(t)+\phi^{-1}(t)\phi^{-2}(t)\right)dt=
1-3\int\limits_0^1\phi(t)\phi^{-1}(t)dt.
$$
\end{proof}
\begin{example}
Let $\phi(x)=x^\alpha$ $(0<\alpha<1)$. It can be proved by direct
calculations that
$$
D_{V_{x^\alpha}}(\lambda)=
1+\sum\limits_{n=1}^\infty(-1)^n\lambda^n\int\limits_0^1
\int\limits_{t_1^\alpha}^1\dots\int\limits_{t_{n-1}^\alpha}^1dt_n\dots
dt_1
$$
$$
=1+\sum\limits_{n=1}^\infty(-1)^n\lambda^n\frac{\alpha^{n(n-1)/2}(1-\alpha)^n}{(1-\alpha)\dots
 (1-\alpha^n)}
=\prod\limits_{n=1}^\infty\left(1-\frac{\lambda}{(1-\alpha)\alpha^{n-1}}\right).
$$
Hence,
$\sigma_p(V_{x^\alpha})=\{(1-\alpha)\alpha^{n-1}\}_{n=1}^\infty$ and
each eigenvalue of $V_{x^\alpha}$ is of algebraic multiplicity one.
Further, $\sum\limits_{n=1}^\infty(1-\alpha)\alpha^{n-1}=1$ and
$\sum\limits_{n=1}^\infty\left((1-\alpha)\alpha^{n-1}\right)^\varepsilon=
\frac{(1-\alpha)^\varepsilon}{1-\alpha^\varepsilon}<\infty$ for each
$\varepsilon>0$.
\end{example}
 \setcounter{section}{5}
  {\bf 5. Some generalizations.}

 {\bf5.1.}
  The following Lemma  can be proved by direct calculations.
  \begin{lemma}\label{auxLemma5}
  Let $A$ be a compact operator defined on a Hilbert space $\mathfrak H$.
  Let also $\mathfrak H=\bigoplus\limits_{i=1}^k\mathfrak
  H_i$ and $A_i:=P_iA:\ \mathfrak H_i\rightarrow \mathfrak H_i $,
   where  $P_i$ be an orthoprojection in $\mathfrak H$
  onto $\mathfrak H_i$. Suppose
  that $\{\bigoplus\limits_{j=1}^i\mathfrak
  H_j\}_{i=1}^k$ is invariant for $A$; then
 $1/\lambda$ is an eigenvalue of $A$ of the algebraic multiplicity
$m\ge 1$ if and only if $1/\lambda$ is an eigenvalue of $A_i$ of
the algebraic multiplicity $m_i\ge 0$ and $\sum\limits_{i=1}^k
m_i=m$.
  \end{lemma}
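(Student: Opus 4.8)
The plan is to read the hypothesis as the statement that $A$ is block upper-triangular with respect to the decomposition $\mathfrak H=\bigoplus_{i=1}^k\mathfrak H_i$, and then to extract the algebraic multiplicity of the fixed nonzero number $\mu:=1/\lambda$ from the diagonal blocks by means of the Riesz (spectral) projection. First I would fix the matrix of $A$ relative to the decomposition: writing $A_{li}:=P_lA|_{\mathfrak H_i}$, the invariance of $\mathcal F_i:=\bigoplus_{j=1}^i\mathfrak H_j$ means that for $v\in\mathfrak H_i$ one has $Av\in\mathcal F_i$, whence $A_{li}=0$ for $l>i$; the diagonal blocks are exactly $A_{ii}=A_i$. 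Thus $A$ is upper-triangular, and so is $zI-A$ for every $z$.

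Next I would prove the spectral inclusion $\sigma(A)\setminus\{0\}\subseteq\bigcup_{i=1}^k\sigma(A_i)$. If $z\ne 0$ and $z\notin\bigcup_i\sigma(A_i)$, then each diagonal block $zI-A_i$ is invertible on $\mathfrak H_i$, and solving $(zI-A)x=y$ block by block from the last coordinate downward (back-substitution, legitimate because $zI-A$ is upper-triangular) produces a bounded inverse; hence $z\notin\sigma(A)$. The same back-substitution shows that the resolvent $R(z):=(zI-A)^{-1}$ is again upper-triangular and that its diagonal blocks are $R(z)_{ii}=(zI-A_i)^{-1}$; one sees this most cleanly by comparing the $(i,i)$ block of $R(z)(zI-A)=I$, where triangularity kills every off-diagonal contribution.

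Now I would choose the contour. Since $A$ and each $A_i$ are compact, the finite union $\bigcup_i\sigma(A_i)$ accumulates only at $0$, so the fixed $\mu\ne 0$ is isolated in it; pick a small circle $\Gamma$ enclosing $\mu$, excluding $0$, and meeting no other point of $\bigcup_i\sigma(A_i)$ on it or inside it. By the inclusion above, $\Gamma$ then lies in the resolvent set of $A$ and encloses no point of $\sigma(A)$ other than $\mu$. The Riesz projection $P:=\frac{1}{2\pi i}\oint_\Gamma R(z)\,dz$ is therefore the spectral projection of $A$ at $\mu$, so $\operatorname{rank}P=m$, while its $i$-th diagonal block $P_{ii}=\frac{1}{2\pi i}\oint_\Gamma (zI-A_i)^{-1}\,dz$ is the spectral projection of $A_i$ at $\mu$, of rank $m_i$ (with $m_i=0$ precisely when $\mu\notin\sigma(A_i)$).

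Finally I would compute the trace of $P$ in a basis adapted to the decomposition. Because $\mu\ne0$ is isolated in $\sigma(A)$ and $A$ is compact, $P$ has finite rank, so $\operatorname{rank}P=\operatorname{tr}P$. Choosing an orthonormal basis of $\mathfrak H$ that is the union of orthonormal bases of the $\mathfrak H_i$ and summing $(Pe,e)$ over it splits the trace over the blocks, giving $\operatorname{tr}P=\sum_{i=1}^k\operatorname{tr}P_{ii}=\sum_{i=1}^k m_i$. Hence $m=\sum_{i=1}^k m_i$, which is exactly the asserted equivalence (the side condition $m\ge1$ merely records that $\mu$ is a genuine eigenvalue of $A$, equivalently that some $m_i\ge1$). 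The two points demanding care — and hence the main obstacle — are making the block back-substitution rigorous for bounded operators on an infinite-dimensional space and justifying that the trace of the finite-rank projection splits as the sum of the traces of its diagonal blocks; both become routine once the adapted basis is fixed, but that is where the argument must be written out with attention.
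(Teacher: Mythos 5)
Your argument is correct. Note that the paper itself gives no proof of this lemma (it is stated with ``The proof is omitted''), so there is nothing to compare against; your write-up supplies the missing argument. The route you take --- observe that the invariance of the nested subspaces $\bigoplus_{j=1}^i\mathfrak H_j$ makes $A$ block upper-triangular, deduce $\sigma(A)\setminus\{0\}\subseteq\bigcup_i\sigma(A_i)$ and the triangularity of the resolvent by block back-substitution (which is unproblematic for finitely many blocks, since the inverse is a finite composition of bounded maps), and then identify the algebraic multiplicity $m$ with the trace of the Riesz projection and split that trace over the diagonal blocks in an adapted orthonormal basis --- is a standard and fully rigorous way to get the additivity $m=\sum_i m_i$, including the degenerate cases $m=0$ or $m_i=0$. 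The only points worth spelling out when writing it up are the ones you already flag: that $R(z)_{ii}=(zI-A_i)^{-1}$ follows from comparing the $(i,i)$ block of $R(z)(zI-A)=I$ once triangularity of $R(z)$ is known, and that the finite rank of the Riesz projection (guaranteed by compactness and $\mu\neq 0$) is what legitimizes both $\operatorname{rank}P=\operatorname{tr}P$ and the block decomposition of the trace. An alternative, slightly more algebraic route would induct on $k$ using the exact sequence relating $A|_{\mathcal F_{k-1}}$, $A$, and the quotient operator on $\mathfrak H/\mathcal F_{k-1}\cong\mathfrak H_k$, but your contour-integral version is just as clean.
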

  \begin{proof}
  The proof is omitted.
  \end{proof}
\begin{theorem}\label{Freddet2}
 Let $\phi :  [0,1]\longrightarrow [0,1]$  be a strictly increasing continuous
function. Let also   $\{ x:\ \phi(x)=x,\quad x\in
(0,1)\}=\{a_i\}_{i=1}^{k-1}$, where $0<a_1<\dots<a_{k-1}<1$ $(k\ge
2)$. By definition, put $a_0:=0$, $a_k:=1$, and
$$
\phi_i(x):=
(\phi(x(a_i-a_{i-1})+a_{i-1})-a_{i-1})/(a_i-a_{i-1}),\qquad 1\leq
i\leq k.
$$
$$
D_{V_{\phi_i}}(\lambda):=\begin{cases}
1+\sum\limits_{n=0}^\infty(-\lambda)^n
\left\{
     \genfrac{}{}{0pt}{}{1}{0},
     \genfrac{}{}{0pt}{}{1}{\phi_i},
     \dots,
     \genfrac{}{}{0pt}{}{1}{\phi_i}
     \right\}
%
%
,&\phi_i(x)>x\text{  for  }x\in (0,1);\\
1,& \phi_i(x)<x\text{  for  }x\in (0,1).
\end{cases}
$$
Then

$(1)$ $1/\lambda\in \sigma_p(V_\phi)$ if and only if
$\prod\limits_{i=1}^k D_{V_{\phi_i}}((a_i-a_{i-1})\lambda)=0$;

$(2)$ the algebraic multiplicity of the eigenvalue $1/\lambda$ is
equal to the  multiplicity of $\lambda$ as a root of the entire
function $\prod\limits_{i=1}^k
D_{V_{\phi_i}}((a_i-a_{i-1})\lambda)$.
\end{theorem}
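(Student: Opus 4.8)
The plan is to diagonalize $V_\phi$ along the natural splitting $L_2[0,1]=\bigoplus_{i=1}^k\mathfrak H_i$, $\mathfrak H_i:=L_2[a_{i-1},a_i]$, and then feed the diagonal blocks into Lemma \ref{auxLemma5}. First I would establish the invariant flag. Since each interior $a_i$ ($1\le i\le k-1$) is a fixed point and $\phi$ is strictly increasing, $\phi$ maps each $[a_{i-1},a_i]$ into itself; moreover, if $f$ vanishes on $[0,a_{i-1})$, then for $x<a_{i-1}$ we have $\phi(x)<\phi(a_{i-1})=a_{i-1}$, so $(V_\phi f)(x)=\int_0^{\phi(x)}f(t)\,dt=0$. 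Hence every $L_2[a_{i-1},1]=\bigoplus_{j\ge i}\mathfrak H_j$ is $V_\phi$-invariant. After reversing the labelling of the blocks, this decreasing family of invariant subspaces is exactly the increasing invariant flag $\{\bigoplus_{j=1}^i\mathfrak H_j\}$ required by Lemma \ref{auxLemma5}; $V_\phi$ is compact (its kernel is bounded), so the lemma applies, and the relabelling is purely cosmetic since it permutes the blocks among themselves.

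Next I would identify the diagonal block $A_i:=P_iV_\phi|_{\mathfrak H_i}$. For $f$ supported on $[a_{i-1},a_i]$ and $x\in[a_{i-1},a_i]$ one has $\phi(x)\le a_i$, whence $(A_if)(x)=\int_{a_{i-1}}^{\phi(x)}f(t)\,dt$. The key computation is to conjugate $A_i$ by the affine unitary $U_i:L_2[0,1]\to L_2[a_{i-1},a_i]$ given by $(U_ig)(x)=(a_i-a_{i-1})^{-1/2}g\big((x-a_{i-1})/(a_i-a_{i-1})\big)$. Writing $T_i(y):=(a_i-a_{i-1})y+a_{i-1}$, a change of variables shows $U_i^{-1}A_iU_i=(a_i-a_{i-1})V_{\phi_i}$, where $\phi_i=T_i^{-1}\circ\phi\circ T_i$ is precisely the affine conjugate of $\phi|_{[a_{i-1},a_i]}$ appearing in the statement. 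As $\phi_i$ is strictly increasing with no fixed point in $(0,1)$, either $\phi_i(x)>x$ or $\phi_i(x)<x$ throughout $(0,1)$, which matches the two cases of the definition of $D_{V_{\phi_i}}$.

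Then I would pass to Fredholm determinants. Substituting $t_p=T_i(s_p)$ in the multiple integrals \eqref{fredholm} that define the coefficients $A_n$ of $A_i$, the kernel $\chi(\phi(t_p)-t_q)$ becomes $\chi(\phi_i(s_p)-s_q)$ (the positive factor $a_i-a_{i-1}$ drops out of the characteristic function) while the $n$ Jacobians produce $(a_i-a_{i-1})^n$; hence $A_n(A_i)=(a_i-a_{i-1})^nA_n(V_{\phi_i})$ and $D_{A_i}(\lambda)=D_{V_{\phi_i}}\big((a_i-a_{i-1})\lambda\big)$. When $\phi_i(x)>x$, the genuine Fredholm determinant of $V_{\phi_i}$ is exactly the series in the statement by Theorem \ref{Freddet1}; when $\phi_i(x)<x$ we have $\phi_i(x)\le x$, so $V_{\phi_i}$ is quasinilpotent (cf.\ the Introduction and \cite{Tong,Whitley}), it has no nonzero eigenvalue, and its Fredholm determinant has no zero, consistently with the convention $D_{V_{\phi_i}}\equiv1$. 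In either case, by the root--eigenvalue correspondence (iii) of Section 2.2 the nonzero elements of $\sigma_p(A_i)$ are the reciprocals of the zeros of $\lambda\mapsto D_{V_{\phi_i}}((a_i-a_{i-1})\lambda)$, with algebraic multiplicity equal to the order of the zero.

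Finally I would assemble the blocks via Lemma \ref{auxLemma5}: $1/\lambda\in\sigma_p(V_\phi)$ with algebraic multiplicity $m$ if and only if $1/\lambda\in\sigma_p(A_i)$ with multiplicity $m_i$ for each $i$ and $\sum_i m_i=m$. Translating each block through the determinant identity yields $(1)$ and $(2)$ simultaneously, because $\prod_{i=1}^kD_{V_{\phi_i}}((a_i-a_{i-1})\lambda)=0$ precisely when some factor vanishes, and the total order of vanishing of the product at $\lambda$ equals $\sum_i m_i=m$. I expect the main obstacle to be the second and third paragraphs — verifying the unitary equivalence $A_i\cong(a_i-a_{i-1})V_{\phi_i}$ and correctly propagating the scaling factor into the argument of the Fredholm determinant — together with the bookkeeping that the piecewise definition of $D_{V_{\phi_i}}$ (in particular the value $1$ in the quasinilpotent regime) faithfully records the absence of any eigenvalue contribution from the corresponding block.
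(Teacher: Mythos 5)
Your proposal is correct and follows essentially the same route as the paper: split $L_2[0,1]=\bigoplus_i L_2[a_{i-1},a_i]$, identify the diagonal block $P_iV_\phi|_{L_2[a_{i-1},a_i]}$ as similar (via the affine substitution) to $(a_i-a_{i-1})V_{\phi_i}$, convert to the Fredholm determinants through Theorem \ref{Freddet1} and property (iii) of Section 2.2, and assemble the multiplicities with Lemma \ref{auxLemma5}. The only (cosmetic) differences are that you use a normalized unitary in place of the bare substitution operator $T_i$, and you correctly observe that the invariant flag is the decreasing one $\{L_2[a_{i-1},1]\}$ (relabelled to fit Lemma \ref{auxLemma5}) --- a point on which you are in fact more careful than the paper, which asserts invariance of $L_2[0,a_i]$.
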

\begin{proof}
By definition, put $\mathfrak H:=L_2[0,1]$, $\mathfrak
H_i:=L_2[a_{i-1},a_i]$ and
\begin{gather*}
P_i:\ f(x)\rightarrow
\begin{cases}
f(x),&x\in [a_{i-1},a_i];\\ 0,&x\not\in [a_{i-1},a_i];
\end{cases},\quad P_i:\ \mathfrak H\rightarrow \mathfrak H_i,\\
A:=V_\phi,\quad A_i:=P_iA\upharpoonleft_{\mathfrak H_i},\\
 T_i:\
\begin{cases} f(x),&x\in [a_{i-1},a_i];\\ 0,&x\not\in
[a_{i-1},a_i];
\end{cases}\rightarrow f((a_i-a_{i-1})x+a_{i-1}),\quad T_i:\ \mathfrak
H_i\rightarrow \mathfrak H.
\end{gather*}
It follows easily that $\bigoplus\limits_{j=1}^i\mathfrak
  H_j(=L_2[0,a_i])$ is invariant for $A$  and
\begin{gather*}
 A_i:\
\begin{cases}
f(x),&x\in [a_{i-1},a_i];\\ 0,&x\not\in [a_{i-1},a_i];
\end{cases}\rightarrow
\begin{cases}
\int\limits_{a_{n-1}}^{\phi(x)}f(t)dt,&x\in [a_{i-1},a_i];\\
0,&x\not\in [a_{i-1},a_i];
\end{cases},\\
T_i^{-1}:\ f(x)\rightarrow
 \begin{cases}
f(\frac{x-a_{i-1}}{a_i-a_{i-1}}),&x\in [a_{i-1},a_i];\\
0,&x\not\in [a_{i-1},a_i];
\end{cases},\quad T_i:\
\mathfrak H\rightarrow \mathfrak H_i,\\
T_iA_iT_i^{-1}=(a_i-a_{i-1})V_{\phi_i}.
\end{gather*}
The application of  Theorem \ref{Freddet1}  yields
$$
1/\lambda\in \sigma_p(A_i)\Leftrightarrow
1/\lambda\in\sigma_p((a_i-a_{i-1})V_{\phi_i})\Leftrightarrow
D_{V_{\phi_i}}((a_i-a_{i-1})\lambda)=0.
$$
The applying of Lemma \ref{auxLemma5} completes the proof.
\end{proof}
\begin{corollary}
Suppose  $\phi(x)$  satisfies the conditions of Theorem
\ref{Freddet2} and $mes\{x:\ \phi(x)\ge x,\ x\in
  [0,1]\}>0$. Set also
$\sigma_p(V_\phi)\setminus\{0\}=\{\lambda_n\}_{n=1}^\omega$
$(1\le\omega\le\infty)$. Then\\
$(1)$ $\omega<\infty$ if and only if $\phi(0)>0$,
$\phi(1-\varepsilon)=1$ for some $0<\varepsilon<1$
 and $\phi(x)>x$ for all $x\in (0,1)$;
\begin{flalign*}
 (2)&\quad &
  \lim\limits_{\varepsilon\rightarrow
  0}\sum\limits_{|\lambda_n|>\varepsilon}\lambda_n=mes\{x:\ \phi(x)\ge x,\ x\in
  [0,1]\}.&\quad &
\end{flalign*}
\end{corollary}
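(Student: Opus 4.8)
My plan is to reduce the corollary to the single-region theorems (Theorem~\ref{Th4.7} and Theorem~\ref{Theoreminfinite1}) by means of the block decomposition of Theorem~\ref{Freddet2}. Write $I_i:=(a_{i-1},a_i)$ for $1\le i\le k$. Since $\phi$ is continuous and strictly increasing with fixed-point set $\{a_1,\dots,a_{k-1}\}$ in $(0,1)$, the function $\phi(x)-x$ has no zero inside any $I_i$ and hence a constant sign there; I call $I_i$ \emph{ascending} if $\phi(x)>x$ on $I_i$ and \emph{descending} if $\phi(x)<x$ on $I_i$, so that $\phi_i(x)>x$ or $\phi_i(x)<x$ on $(0,1)$, respectively. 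By Theorem~\ref{Freddet2} together with Lemma~\ref{auxLemma5}, the nonzero spectrum of $V_\phi$, counted with algebraic multiplicity, is the disjoint union over $i$ of $(a_i-a_{i-1})\bigl(\sigma_p(V_{\phi_i})\setminus\{0\}\bigr)$. A descending $I_i$ has $D_{V_{\phi_i}}\equiv 1$ and contributes no eigenvalue, while an ascending $I_i$ yields a strictly increasing continuous $\phi_i$ with $\phi_i(x)>x$ on $(0,1)$, to which the earlier theorems apply.

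For part~(2) I would first identify the measure: the set $\{x\in[0,1]:\phi(x)\ge x\}$ is the union of the closed ascending intervals with the finitely many fixed points, so $mes\{x:\phi(x)\ge x\}=\sum_{I_i\text{ ascending}}(a_i-a_{i-1})$. Using the disjoint-union description of the spectrum,
\begin{equation*}
\sum_{|\lambda_n|>\varepsilon}\lambda_n=\sum_{i=1}^k(a_i-a_{i-1})\sum_{|\nu|>\varepsilon/(a_i-a_{i-1})}\nu,
\end{equation*}
where the inner sum runs over $\nu\in\sigma_p(V_{\phi_i})\setminus\{0\}$. Each ascending $\phi_i$ is strictly increasing, so $\phi_i(1-\delta)\ne 1$ for every $\delta>0$, and Theorem~\ref{Theoreminfinite1}(2) gives $\lim_{\varepsilon\to 0}\sum_{|\nu|>\varepsilon}\nu=1$; a descending interval contributes $0$. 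As there are only finitely many intervals, I can pass the limit $\varepsilon\to0$ through the outer sum to obtain $\lim_{\varepsilon\to0}\sum_{|\lambda_n|>\varepsilon}\lambda_n=\sum_{I_i\text{ ascending}}(a_i-a_{i-1})=mes\{x:\phi(x)\ge x\}$, which is (2).

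For part~(1) I would argue that $\omega<\infty$ holds exactly when every ascending $V_{\phi_i}$ has finitely many nonzero eigenvalues (there is at least one ascending interval, since otherwise $mes\{\phi\ge x\}=0$). By Theorem~\ref{Th4.7} and Theorem~\ref{Theoreminfinite1}, an ascending $V_{\phi_i}$ has finitely many nonzero eigenvalues iff $\phi_i(0)>0$ and $\phi_i(1-\delta)=1$ for some $\delta$. Now $\phi_i(0)>0$ means $\phi(a_{i-1})>a_{i-1}$, which forces $a_{i-1}$ to be a non-fixed point, i.e.\ $i=1$ and $\phi(0)>0$; hence at most one ascending interval, namely $(0,a_1)$, is admissible, and the remaining intervals must be descending. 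Translating the two conditions on $\phi_1$ back through the rescaling produces exactly the global conditions $\phi(0)>0$ and $\phi(1-\varepsilon)=1$ together with $\phi(x)>x$ on all of $(0,1)$; the structural point is that strict monotonicity makes $\phi_i(1-\delta)=1$ untenable, which is what pins down whether the finite case can occur and keeps the equivalence consistent with the presence of the interior fixed points $a_1,\dots,a_{k-1}$.

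The step I expect to be most delicate is the translation in part~(1): I must verify that the two local conditions $\phi_i(0)>0$ and $\phi_i(1-\delta)=1$, imposed across all ascending blocks, collapse to precisely the stated global conditions, and that this is logically consistent with the standing hypotheses (strict monotonicity and the existence of $a_1,\dots,a_{k-1}$). Part~(2), by contrast, is essentially a finite $(a_i-a_{i-1})$-weighted sum of the already-established single-region limit of Theorem~\ref{Theoreminfinite1}(2), so the only care needed there is the harmless interchange of $\varepsilon\to0$ with the finite sum over the $k$ blocks and the elementary measure identification.
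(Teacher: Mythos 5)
Your proposal is correct and follows essentially the same route as the paper: decompose via Theorem~\ref{Freddet2} and Lemma~\ref{auxLemma5} into the blocks $(a_i-a_{i-1})V_{\phi_i}$, discard the descending blocks, apply Theorem~\ref{Theoreminfinite1}(2) (resp.\ Theorems~\ref{Th4.7}/\ref{Theoreminfinite1} for part~(1)) to each ascending block, and pass the limit through the finite weighted sum to get $\sum_{i}(a_i-a_{i-1})=mes\{x:\phi(x)\ge x\}$. In fact you supply details the paper omits — notably the verification that Theorem~\ref{Theoreminfinite1} applies to each ascending $\phi_i$ (via $\phi_i(0)=0$ for $i\ge 2$, or $\phi_i(1-\delta)\ne 1$ by strict monotonicity) and the case analysis behind part~(1), which the paper dispatches in one line.
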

\begin{proof}
$(1)$ follows from Theorems \ref{Th4.7}, \ref{Theoreminfinite1},
\ref{Freddet2}.\\
$(2)$ By definition, put
\begin{gather*}
\Omega:=\{i:\ \phi(x)\ge x\ \text{ for }\ x\in [a_{i-1},a_i]\}=
\{i:\ \phi_i(x)\ge x\ \text{ for }\ x\in [0,1]\},\\
\sigma_p(V_{\phi_i}):=\{\lambda_{in}\}_{n=1}^{\omega_i},\qquad
1\le\omega\le\infty,\qquad i\in\Omega.
\end{gather*}
By Theorem \ref{Freddet2}
$$
\{\lambda_n\}_{n=1}^\omega=\sigma_p(V_\phi)=\bigcup\limits_{i\in\Omega}\sigma_p((a_i-a_{i-1})V_{\phi_i})=
\bigcup\limits_{i\in\Omega}(a_i-a_{i-1})\{\lambda_{in}\}_{n=1}^{\omega_i}.
$$
By Theorem \ref{Theoreminfinite1}
$$
 \lim\limits_{\varepsilon\rightarrow
  0}\sum\limits_{|\lambda_{in}|>\varepsilon}\lambda_{in}=1.
$$
Thus
\begin{equation*}
 \begin{split}
\lim\limits_{\varepsilon\rightarrow
  0}\sum\limits_{|\lambda_n|>\varepsilon}\lambda_n&=\sum\limits_{i\in\Omega}(a_i-a_{i-1})\lim\limits_{\varepsilon\rightarrow
  0}\sum\limits_{|\lambda_{in}|>\varepsilon}\lambda_{in}\\
&=\sum\limits_{i\in\Omega}(a_i-a_{i-1})=mes\{x:\
\phi(x)\ge x,\ x\in [0,1]\}.
 \end{split}
\end{equation*}
\end{proof}
\begin{remark}
It is interesting to note that  the  case of nonincreasing
function $\phi$ can be  more multifarious. In particular, if
$\phi(x)$ is a strictly decreasing continuous function such that
$\phi(0)=1$, $\phi(1)=0$ and $\phi(\phi(x))=x$ then $V_{\phi}$ is
a selfadjoint operator in $L_2[0,1]$. For example,
$\sigma_p(V_{1-x})=\{\frac{2(-1)^n}{(2n+1)\pi}\}_{n=1}^\infty$
 and $\sum\limits_{n=1}^\infty\frac{2(-1)^n}{(2n+1)\pi}=
 \frac{2}{\pi}\frac{\pi}{4}=\frac{1}{2}=mes\{x:
1-x\ge x\}$.
\end{remark}
{\bf 5.2.} In this subsection we consider an operator $V_\phi$
defined on $L_p[0,1]$ $(1\le p<\infty)$.

Let $A_i$ be a bounded operator defined on Banach space $X_i$
$(i=1,2)$. Recall that $A_1$ is said to be quasisimilar to $A_2$
if there exist deformations $K:\ X_1\rightarrow X_2$ and $L:\
X_2\rightarrow X_1$ (i.e. $\overline{\mathfrak{R}(K)}=X_2$, $ker
K=\{0\}$, $\overline{\mathfrak{R}(L)}=X_1$, $ker L=\{0\}$) such
that $A_1L=LA_2$ and $KA_1=A_2K$. It is clear that
$\sigma_p(A_1)=\sigma_p(A_2)$.
\begin{proposition}
 Let $\phi :  [0,1]\longrightarrow [0,1]$  be a strictly increasing continuous
function such that $\phi(0)=0$ and $\phi(1)=1$.
 Let $A_1$ denote an operator $V_\phi$ defined on $L_p[0,1]$
$(1\le p<\infty)$ and let $A_2$ denote an operator $V_\phi$
defined on $L_2[0,1]$. Then $A_1$ is quasisimilar to $A_2$, and
hence $\sigma_p(A_1)=\sigma_p(A_2)$.
\end{proposition}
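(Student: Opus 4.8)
The plan is to use the operator $V_\phi$ itself, viewed as a map between the two spaces, as both required deformations. Set $K := V_\phi : L_p[0,1] \to L_2[0,1]$ and $L := V_\phi : L_2[0,1] \to L_p[0,1]$; these are literally the same integral operator, merely regarded between different spaces. Because $V_\phi$ is one and the same pointwise operator, the compositions $KA_1$, $A_2K$, $A_1L$, $LA_2$ all coincide with $V_\phi^2$, so the intertwining relations $KA_1 = A_2K$ and $A_1L = LA_2$ hold automatically. The entire content of the proof is therefore to verify that $K$ and $L$ are deformations: bounded, injective, and with dense range.

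Boundedness and injectivity are routine. Since $\phi(x) \le 1$ and $[0,1]$ has length $1$, H\"older's inequality gives $|(V_\phi f)(x)| = \bigl|\int_0^{\phi(x)} f(t)\,dt\bigr| \le \|f\|_{L_p}$ for every $x$, so $V_\phi$ carries each $L_p[0,1]$ into $C[0,1]$ with $\|V_\phi f\|_\infty \le \|f\|_{L_p}$; composing with the embedding $C[0,1] \hookrightarrow L_q[0,1]$ shows both $K$ and $L$ are bounded. For injectivity, note that $V_\phi f$ is continuous, so $V_\phi f = 0$ forces $\int_0^{\phi(x)} f = 0$ for \emph{all} $x$; since $\phi(0)=0$, $\phi(1)=1$ and $\phi$ is strictly increasing, the values $\phi(x)$ sweep out all of $[0,1]$, whence $\int_0^s f = 0$ for every $s$ and $f=0$ a.e. Thus $\ker K = \ker L = \{0\}$.

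The main obstacle is the density of the two ranges, which I would handle by duality. A Fubini computation with the substitution $\phi(x) \ge t \iff x \ge \phi^{-1}(t)$ identifies the formal transpose of the kernel $\chi(\phi(x)-t)$ as $g \mapsto \int_{\phi^{-1}(\cdot)}^1 g(x)\,dx$. To prove $\mathfrak R(L)$ dense in $L_p[0,1]$ it suffices, by Hahn--Banach, to show that any $h \in L_{p'}[0,1]$ annihilating $\mathfrak R(L)$ vanishes; but $\int_0^1 (V_\phi f)\,h = \int_0^1 f(t)\bigl(\int_{\phi^{-1}(t)}^1 h\bigr)\,dt = 0$ for all $f \in L_2[0,1]$, in particular for all indicator functions, forces $\int_{\phi^{-1}(t)}^1 h = 0$ for a.e.\ $t$. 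The function $H(s) := \int_s^1 h$ is continuous (as $h \in L_1$), so $H \circ \phi^{-1}$ is continuous and vanishes a.e., hence everywhere; since $\phi^{-1}$ is onto $[0,1]$ this gives $H \equiv 0$ and $h = 0$. Reading the same identity as orthogonality in $L_2[0,1]$ shows $V_\phi^\ast$ is injective, so $V_\phi : L_2 \to L_2$ has dense range; because $L_\infty[0,1] \subseteq L_p[0,1]$ is dense in $L_2[0,1]$ and $V_\phi$ is bounded on $L_2$, we get $\overline{\mathfrak R(K)} \supseteq \overline{V_\phi(L_\infty)} = \overline{V_\phi(L_2)} = L_2[0,1]$.

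Having verified all four deformation properties, $A_1$ and $A_2$ are quasisimilar, and $\sigma_p(A_1) = \sigma_p(A_2)$ follows from the observation recorded just before the statement. I expect the density of $\mathfrak R(L)$ in $L_p$ to be the only delicate point, and it is worth stressing that the surjectivity of $\phi$ onto $[0,1]$ --- guaranteed precisely by the hypotheses $\phi(0)=0$, $\phi(1)=1$ and continuity --- is exactly what drives both the injectivity and the duality arguments.
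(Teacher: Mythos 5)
Your proposal is correct and takes exactly the same route as the paper: the paper's proof also sets $K:=V_\phi:L_p[0,1]\to L_2[0,1]$ and $L:=V_\phi:L_2[0,1]\to L_p[0,1]$ and simply declares that these are deformations satisfying the intertwining relations. You have merely written out the boundedness, injectivity and dense-range verifications that the paper leaves as ``it is clear,'' and your duality argument for density (using the continuity of $s\mapsto\int_s^1 h$ and the surjectivity of $\phi$) is sound.
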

\begin{proof}
By definition, put $K:=V_\phi:\ L_p[0,1]\rightarrow L_2[0,1]$,
$L:=V_\phi:\ L_2[0,1]\rightarrow L_p[0,1]$. It is clear that $K$
and $L$ are deformations and $A_1L=LA_2$, $KA_1=A_2K$.
\end{proof}

{\bf 5.3.} Now we consider the  operator
  $(V_{\phi,q,w}f)(x):=q(x)\int\limits_0^{\phi(x)}f(t)w(t)dt$ defined on $L_2[0,1]$.

  The proof of the following theorem is similar to the proof of
  Theorem \ref{Freddet1}.
\begin{theorem}\label{last}
Let $\phi :  [0,1]\longrightarrow [0,1]$ be a nondecreasing
continuous function such that $\phi(x)>x$ for all $x\in (0,1)$.
Let also $q(x),w(x)\in L_2[0,1]$. Then
\begin{equation*}
\begin{split}
&D_{V_{\phi,q,w}}(\lambda)=
\\
&=1+\sum\limits_{n=1}^\infty\ (-1)^n \lambda^n\int\limits_0^1
\int\limits_{\phi(t_1)}^1\dots\int\limits_{\phi(t_{n-1})}^1q(t_1)w(t_1)\dots
q(t_n)w(t_n)dt_n\dots dt_1.
\end{split}
\end{equation*}
 \end{theorem}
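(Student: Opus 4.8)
The plan is to follow the proof of Theorem~\ref{Freddet1} almost verbatim; the only new feature is the pair of scalar weights $q,w$, which will factor out of every Fredholm minor by multilinearity of the determinant. First I would write $V_{\phi,q,w}$ as the integral operator $(V_{\phi,q,w}f)(x)=\int_0^1 k(x,t)f(t)\,dt$ with
\[
k(x,t)=q(x)\,w(t)\,\chi(\phi(x)-t)=\begin{cases} q(x)w(t),&\phi(x)\ge t;\\ 0,&\phi(x)<t.\end{cases}
\]
This kernel need not be bounded, so $D_{V_{\phi,q,w}}$ is to be understood as the series \eqref{fredholm2} built from the coefficients \eqref{fredholm}; these are well defined because $q,w\in L_2[0,1]$ forces $qw\in L_1[0,1]$ by Cauchy--Schwarz.

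The key step is the evaluation of the minor $K(t_1,\dots,t_n)$ on the ordered simplex. Fix $0\le t_1\le t_2\le\dots\le t_n\le 1$. Because $\phi$ is nondecreasing, continuous and satisfies $\phi(x)>x$ on $(0,1)$, one has $\phi(x)\ge x$ on all of $[0,1]$; hence for $1\le j\le i\le n$ we have $t_j\le t_i\le\phi(t_i)$, so $\chi(\phi(t_i)-t_j)=1$. Pulling $q(t_i)$ out of the $i$-th row and $w(t_j)$ out of the $j$-th column of $\bigl(k(t_i,t_j)\bigr)_{i,j=1}^n$ gives
\[
K(t_1,\dots,t_n)=\Bigl(\prod_{i=1}^n q(t_i)w(t_i)\Bigr)\det\bigl(\chi(\phi(t_i)-t_j)\bigr)_{i,j=1}^n .
\]
The residual $0$--$1$ matrix meets the hypotheses of Lemma~\ref{detK}, whence its determinant equals $\prod_{i=2}^n\bigl(1-\chi(\phi(t_{i-1})-t_i)\bigr)$.

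From here the computation of $A_n$ copies that of Theorem~\ref{Freddet1}. By the reordering identity \eqref{fredholm1},
\[
A_n=n!\int_0^1\!\int_{t_1}^1\!\cdots\!\int_{t_{n-1}}^1\Bigl(\prod_{i=1}^n q(t_i)w(t_i)\Bigr)\prod_{i=2}^n\bigl(1-\chi(\phi(t_{i-1})-t_i)\bigr)\,dt_n\cdots dt_1,
\]
and the product of factors $1-\chi(\phi(t_{i-1})-t_i)$ is exactly the indicator of $\Omega_n=\{0\le t_1\le\phi(t_1)\le t_2\le\dots\le\phi(t_{n-1})\le t_n\le 1\}$. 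Restricting to $\Omega_n$ raises the inner limits from $t_{i-1}$ to $\phi(t_{i-1})$, so
\[
A_n=n!\int_0^1\!\int_{\phi(t_1)}^1\!\cdots\!\int_{\phi(t_{n-1})}^1 q(t_1)w(t_1)\cdots q(t_n)w(t_n)\,dt_n\cdots dt_1 .
\]
Substituting into $D_{V_{\phi,q,w}}(\lambda)=\sum_{n\ge0}\frac{(-1)^n}{n!}A_n\lambda^n$ yields the asserted formula. As a byproduct, since $\prod_i|q(t_i)w(t_i)|$ is symmetric, the same identity \eqref{fredholm1} gives $|A_n|\le n!\int_{0\le t_1\le\dots\le t_n\le1}\prod_i|q(t_i)w(t_i)|=\|qw\|_1^{\,n}$, so $\sum_n\frac{|A_n|}{n!}|\lambda|^n\le e^{\|qw\|_1|\lambda|}$ and $D_{V_{\phi,q,w}}$ is entire.

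The main obstacle is not the algebra but the legitimacy of the integral manipulations in this $L_2$, hence unbounded-kernel, setting: one must justify Fubini's theorem and the symmetrization passing from \eqref{fredholm} to the ordered-simplex form \eqref{fredholm1}. Both reduce to absolute integrability of $\prod_{i=1}^n|q(t_i)w(t_i)|$ over $[0,1]^n$, which is again the statement $qw\in L_1[0,1]$; once absolute convergence is in hand, the factoring of the determinant and the reindexing go through exactly as in the bounded case, and no further estimate of the type in Proposition~\ref{growthofD} is needed for the identity itself.
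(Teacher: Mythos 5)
Your proof is correct and follows exactly the route the paper intends: the paper gives no separate argument for Theorem~\ref{last}, stating only that its proof is similar to that of Theorem~\ref{Freddet1}, and your adaptation---factoring $q(t_i)$ out of the rows and $w(t_j)$ out of the columns of the Fredholm minor by multilinearity, reducing to the $0$--$1$ matrix of Lemma~\ref{detK}, and repeating the ordered-simplex computation---is precisely that. Your additional care about the absolute integrability of $qw$ (needed to justify \eqref{fredholm1} and Fubini for the now possibly unbounded kernel) addresses a point the paper leaves implicit.
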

 \begin{corollary} Let the conditions of Theorem \ref{last} hold
 and $q(x)w(x)> 0$ for a.a. $x\in [0,1]$. Then $\sigma_p(V_{\phi,q,w})\setminus\{0\}$
is a finite set if and only if $\phi(0)>0$ and
$\phi(1-\varepsilon)=1$ for some $0<\varepsilon<1$.
 \end{corollary}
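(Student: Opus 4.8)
The plan is to reduce the weighted problem to the unweighted Theorem \ref{main1} by absorbing the weight $q\,w$ into the variables of integration. Write $g:=q\,w$, so $g>0$ a.e.\ and $g\in L_1[0,1]$ (a product of two $L_2$ functions), and set $M:=\int_0^1 g(t)\,dt\in(0,\infty)$ and $G(x):=\int_0^x g(t)\,dt$. Because $g>0$ a.e., $G$ is a strictly increasing, absolutely continuous bijection of $[0,1]$ onto $[0,M]$ with $G(0)=0$, $G(1)=M$; in particular $G^{-1}$ exists and is continuous and strictly increasing. By Theorem \ref{last},
\[
D_{V_{\phi,q,w}}(\lambda)=1+\sum_{n=1}^\infty(-1)^n\lambda^n B_n,\qquad
B_n=\int_{\Omega_n} g(t_1)\cdots g(t_n)\,dt_1\cdots dt_n,
\]
where $\Omega_n=\{0\le t_1\le\phi(t_1)\le t_2\le\cdots\le\phi(t_{n-1})\le t_n\le 1\}$ is exactly the region appearing in Theorem \ref{Freddet1}.

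First I would perform the simultaneous substitution $s_i=G(t_i)$ ($1\le i\le n$), under which $g(t_i)\,dt_i=ds_i$ and, since $G$ is strictly increasing, each constraint $\phi(t_i)\le t_{i+1}$ (resp.\ $t_i\le\phi(t_i)$) is equivalent to $\tilde\phi(s_i)\le s_{i+1}$ (resp.\ $s_i\le\tilde\phi(s_i)$), where $\tilde\phi:=G\circ\phi\circ G^{-1}:[0,M]\to[0,M]$. Hence $B_n$ equals the plain Lebesgue volume of the order region of $\tilde\phi$ on $[0,M]$. Rescaling by $M$ via $\hat\phi(u):=\tilde\phi(Mu)/M$ gives a nondecreasing continuous map $\hat\phi:[0,1]\to[0,1]$ with $\hat\phi(u)>u$ on $(0,1)$, and the $n$-fold scaling yields $B_n=M^n\hat A_n$, where $\hat A_n$ is the unweighted coefficient attached to $\hat\phi$ by Theorem \ref{Freddet1}. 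Consequently
\[
D_{V_{\phi,q,w}}(\lambda)=1+\sum_{n=1}^\infty(-1)^n(M\lambda)^n\hat A_n=D_{V_{\hat\phi}}(M\lambda).
\]
Thus the entire functions $D_{V_{\phi,q,w}}$ and $D_{V_{\hat\phi}}$ differ only by the dilation $\lambda\mapsto M\lambda$, so their zero sets are finite simultaneously; by property (iii) of \S2.2 this means $\sigma_p(V_{\phi,q,w})\setminus\{0\}$ is finite if and only if $\sigma_p(V_{\hat\phi})\setminus\{0\}$ is finite (indeed the nonzero spectra are related by the factor $M$).

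It then remains to transport the boundary conditions across the substitution and to invoke Theorem \ref{main1}(1) for $\hat\phi$. Since $G$ is a strictly increasing bijection fixing $0$ and sending $1$ to $M$, one checks that $\hat\phi(0)>0\iff G(\phi(0))>0\iff\phi(0)>0$, and that $\hat\phi(1-\delta)=1$ for some $\delta\in(0,1)\iff\phi(x)=1$ for some $x<1\iff\phi(1-\varepsilon)=1$ for some $\varepsilon\in(0,1)$ (using that $\phi$ is nondecreasing). Applying Theorem \ref{main1}(1) to $\hat\phi$ then gives exactly the asserted equivalence. I expect the only genuine difficulty to be the justification of the multivariable change of variables $s_i=G(t_i)$ when $g$ is merely integrable and possibly unbounded, together with the verification that $\hat\phi$ inherits continuity, monotonicity, and the strict inequality $\hat\phi(u)>u$; this is precisely where the hypothesis $q\,w>0$ a.e.\ is essential, since it is what makes $G$ injective and $G^{-1}$ available. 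With this reduction in hand, all of the growth/order analysis (Proposition \ref{growthofD} and Theorem \ref{EntireFunction}) is already packaged inside Theorem \ref{main1}, so no separate estimate of $B_n$ is needed; alternatively, one could mirror the direct arguments of Theorems \ref{Th4.7} and \ref{Theoreminfinite1}, where positivity of $g$ guarantees that $B_n=0$ exactly when $\Omega_n$ is Lebesgue-null, but the reduction above is the cleaner route.
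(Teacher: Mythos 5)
Your reduction is correct, and it is worth noting that the paper itself offers no proof of this corollary at all: the reader is evidently expected to rerun the machinery of Theorems \ref{Th4.7} and \ref{Theoreminfinite1} on the weighted coefficients $B_n=\int_{\Omega_n}q(t_1)w(t_1)\cdots q(t_n)w(t_n)\,dt$, using $qw>0$ a.e.\ only to guarantee that $B_n$ vanishes exactly when the region $\Omega_n$ is Lebesgue-null, so that the determinant is a polynomial under the same combinatorial condition on $\phi$ as before. Your change of variables $s_i=G(t_i)$, $G(x)=\int_0^x q(t)w(t)\,dt$, is a genuinely different and cleaner route: it yields the exact identity $D_{V_{\phi,q,w}}(\lambda)=D_{V_{\hat\phi}}(M\lambda)$ with $\hat\phi=\frac{1}{M}G\circ\phi\circ G^{-1}(M\,\cdot)$, after which everything is inherited from Theorem \ref{main1}. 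The chief advantage is that the delicate part of the unweighted argument --- namely that a non-polynomial determinant must have infinitely many zeros, which rests on the growth estimates of Proposition \ref{growthofD} and on Theorem \ref{EntireFunction} --- comes for free, whereas the ``mirror the direct arguments'' route would force you to re-derive bounds on $B_n$ with the weight present (only the crude bound $B_n\le M^n/n!$ is immediate, which gives order $\le 1$ but not minimal type). All the steps you flag as needing care do go through: $G$ is strictly increasing and absolutely continuous because $qw>0$ a.e.\ and $qw\in L_1$ (Cauchy--Schwarz), the pushforward of $g\,dt$ under $G$ is Lebesgue measure on $[0,M]$, the constraints defining $\Omega_n$ transform correctly because $G$ is a strictly increasing bijection, and $\hat\phi$ inherits continuity, monotonicity, $\hat\phi(u)>u$ on $(0,1)$, and the two boundary conditions. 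The one caveat, inherited from the paper rather than introduced by you, is that the identification of zeros of the determinant with nonzero eigenvalues (property (iii) of Section 2.2) is stated there for bounded kernels, while the kernel $q(x)\chi(\phi(x)-t)w(t)$ need not be bounded; since the corollary explicitly assumes the conclusions of Theorem \ref{last}, it is legitimate to take this correspondence as given.
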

 {\bf Acknowledgments.} The author wishes to thank Professor J. Zem\'{a}nek
for setting the problem.

  Institute of Applied Mathematics and Mechanics

  Ukrainian National Academy of Sciences

  R. Luxemburg Str. 74

  83114  Donetsk

  Ukraine

 Mathematical Institute of the Academy of Sciences

 of the Czech Republic

 Zitna 25

 CZ - 115 67 Praha 1

 Czech Republic

 e-mail: domanovi@yahoo.com

   \end{document}